\newtheorem{theorem}[equation]{Theorem}
\newtheorem*{theorem*}{Theorem}
\newtheorem{lemma}[equation]{Lemma}
\newtheorem{corollary}[equation]{Corollary}
\newtheorem{question}[equation]{Question}
\theoremstyle{definition}
\newtheorem{example}[equation]{Example}
\newtheorem{definition}[equation]{Definition}
\newtheorem*{definition*}{Definition}
\theoremstyle{remark}
\newtheorem{remark}[equation]{Remark}
\makeatletter\@addtoreset{equation}{section}
\newcommand{\ZZ}{\mathbb{Z}}
\renewcommand{\P}{\mathbb{P}}
\newcommand{\PP}{\mathbb{P}}
\newcommand{\KK}{\mathbb{K}}
\newcommand{\OOO}{{\mathscr{O}}}
\newcommand{\LLL}{{\mathscr{L}}}
\newcommand{\EEE}{{\mathscr{E}}}
\newcommand{\Aut}{\operatorname{Aut}}
\newcommand{\Bim}{\operatorname{Bim}}
\newcommand{\Fix}{\operatorname{Fix}}
\def \ge {\geqslant}
\def \le {\leqslant}
\date{}
\title{Fiberwise bimeromorphic maps of conic bundles}
\author{Constantin Shramov}
\address{Steklov Mathematical Institute of Russian Academy of Sciences, 8 Gubkina st.,
Moscow, 119991, Russia
\newline
National Research University Higher School of Economics, Laboratory of Algebraic Geometry, 6 Usacheva str., Moscow, 119048, Russia
}
\email{costya.shramov@gmail.com}
\thanks{This work is supported by the Russian Science Foundation under grant \textnumero 18-11-00121.}
\begin{document}

\begin{abstract}
Given a holomorphic conic bundle without sections,
we show that the orders of
finite groups acting by its fiberwise bimeromorphic transformations
are bounded. This provides an analog of a similar result obtained
by T.\,Bandman and Yu.\,Zarhin for quasi-projective conic bundles.
\end{abstract}

\keywords{Conic bundle, bimeromorphic map, Jordan property}

\subjclass{32M05, 14E07}

\maketitle

\section{Introduction}

In many cases, biregular and birational structure of algebraic varieties and complex manifolds
is reflected in the properties of finite groups acting on them.
In particular, in certain situations there are boundedness results for such groups
that are implied by the properties of rational curves on these varieties.

The following theorem was proved in \cite{BandmanZarhin2017}
(see also~\cite[Corollary~4.12]{ShramovVologodsky} for a little bit more
general assertion).

\begin{theorem}[{\cite[Corollary~4.11]{BandmanZarhin2017}}]
\label{theorem:conic}
Let $\KK$ be a field of characteristic zero
that contains all roots of~$1$.
Let $C$ be a conic over $\KK$, and let $G\subset\Aut(X)$ be a finite subgroup.
Assume that $C$ has no $\KK$-points. Then
every non-trivial element of $G$ has order $2$,
and $G$ is either a trivial
subgroup, or is isomorphic to $\ZZ/2\ZZ$, or is isomorphic to $\ZZ/2\ZZ\times\ZZ/2\ZZ$.
\end{theorem}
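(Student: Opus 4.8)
The plan is to reduce the assertion to the analysis of a single element $g\in G$ of finite order $n>1$, to show that necessarily $n=2$, and then to identify the possible groups $G$ from the classification of finite subgroups of $\PGL_2$ over an algebraically closed field. Since $C$ has no $\KK$-point it is a smooth conic, so $C_{\bar\KK}\cong\PP^1_{\bar\KK}$, and $C$ becomes isomorphic to $\PP^1$ over any field extension of $\KK$ over which it acquires a point. First I would examine the fixed locus $\Fix(g)\subset C$. Since $\operatorname{char}\KK=0$, the element $g$ has finite order, so its image in $\PGL_2(\bar\KK)$ is semisimple and, being non-trivial, has exactly two distinct fixed points on $\PP^1_{\bar\KK}$. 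Hence $\Fix(g)$ is a degree-$2$ \'etale $\KK$-scheme, and it is either a pair of $\KK$-points --- impossible, as $C$ has no $\KK$-point --- or a single closed point of degree $2$, defining a quadratic field extension $\KK'/\KK$ with non-trivial automorphism $\sigma$. Over $\KK'$ the two geometric fixed points become $\KK'$-rational and are interchanged by $\sigma$, and $C_{\KK'}\cong\PP^1_{\KK'}$. Choosing an affine coordinate $z$ on $\PP^1_{\KK'}$ that carries these fixed points to $0$ and $\infty$, the automorphism $g$ takes the form $z\mapsto\lambda z$ for some primitive $n$-th root of unity $\lambda\in\KK'^{\times}$; by the hypothesis on $\KK$ one has $\lambda\in\KK$, so $\sigma(\lambda)=\lambda$.

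Next I would bring in the descent datum presenting $C$ as a twist of $\PP^1_{\KK'}$ with respect to $\KK'/\KK$: a $\sigma$-semilinear automorphism $\tau$ of $\PP^1_{\KK'}$ with $\tau^{2}=\mathrm{id}$ whose set of $\KK'$-rational fixed points equals $C(\KK)$. Since $\Fix(g)$ is defined over $\KK$ but is a single point there, $\tau$ must interchange $0$ and $\infty$, so in the coordinate $z$ it has the shape $z\mapsto c/\sigma(z)$ for some $c\in\KK'^{\times}$, and $\tau^{2}=\mathrm{id}$ forces $c\in\KK^{\times}$. The statement that $g$ is defined over $\KK$ is equivalent to the commutation $g\circ\tau=\tau\circ g$, and a direct computation gives $(\tau\circ g\circ\tau^{-1})(z)=\sigma(\lambda)^{-1}z$. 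Hence $\sigma(\lambda)=\lambda^{-1}$, which combined with $\sigma(\lambda)=\lambda$ yields $\lambda^{2}=1$; since $g\ne\mathrm{id}$ this forces $\lambda=-1$ and $n=2$. It is exactly here that the assumption on roots of unity is used; without it the argument fails, and indeed $A_4$ acts faithfully by automorphisms on the anisotropic conic $\{x^{2}+y^{2}+z^{2}=0\}$ over $\QQ$, which has no rational point.

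Finally, since every non-trivial element of $G$ has order $2$, the group $G$ is abelian of exponent $2$, hence elementary abelian. Viewing $G$ as a subgroup of $\PGL_2(\bar\KK)$, pick a non-trivial $g_1\in G$ with fixed points $\{p,q\}$ on $\PP^1_{\bar\KK}$; every element of $G$ preserves $\{p,q\}$. The subgroup $G_0\subset G$ fixing $p$ and $q$ individually lies in a single maximal torus $\bar\KK^{\times}\subset\PGL_2(\bar\KK)$, hence is cyclic, hence of order at most $2$ since it has exponent $2$; as $[G:G_0]\le 2$ we conclude $|G|\le 4$, so $G$ is trivial, or $\ZZ/2\ZZ$, or $\ZZ/2\ZZ\times\ZZ/2\ZZ$. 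The step I expect to be the main obstacle is the descent bookkeeping in the middle paragraph: exhibiting the semilinear involution $\tau$, normalizing it to the form $z\mapsto c/\sigma(z)$ compatibly with the coordinate adapted to $g$, and verifying that $\KK$-rationality of $g$ amounts to commutation with $\tau$. Once this is in place, the remaining computations are immediate.
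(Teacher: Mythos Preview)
Your argument is correct. Note, however, that the paper does not actually prove Theorem~\ref{theorem:conic}: it is quoted from \cite{BandmanZarhin2017}. What the paper does contain is a remark (after the proof of Theorem~\ref{theorem:conic-bundle}) that the method of Lemma~\ref{lemma:two-sections}, via Lemma~\ref{lemma:P1}, gives an independent proof of Corollary~\ref{corollary:conic-bundle} and hence, implicitly, of Theorem~\ref{theorem:conic}.

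That route is different from yours and worth contrasting. The paper's observation is that an order-$n$ automorphism of $\PP^1$ acts on the tangent lines at its two fixed points by $\zeta$ and $\zeta^{-1}$ for a primitive $n$-th root of unity $\zeta$ (Lemma~\ref{lemma:P1}). Since $\KK$ contains all roots of unity, $\zeta\in\KK$; if $n>2$ then $\zeta\neq\zeta^{-1}$, and the map ``fixed point $\mapsto$ eigenvalue of $dg$ on the tangent line'' is a $\KK$-rational, Galois-equivariant separation of the two geometric fixed points. Hence each fixed point is individually $\KK$-rational, contradicting $C(\KK)=\varnothing$. This bypasses entirely the descent bookkeeping you single out as the main obstacle: there is no need to write down $\tau$, normalize it to $z\mapsto c/\sigma(z)$, or check commutation relations. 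Your computation that $\tau g\tau^{-1}\colon z\mapsto\sigma(\lambda)^{-1}z$ is, of course, the coordinate incarnation of the same phenomenon (Galois conjugation swaps the fixed points and hence swaps the eigenvalues $\zeta\leftrightarrow\zeta^{-1}$), but the tangent-space formulation gets there without choosing coordinates. On the other hand, your approach is entirely self-contained and makes the role of the hypothesis $\mu_\infty\subset\KK$ equally transparent; the illustrative counterexample over $\QQ$ is a nice touch. The final paragraph bounding $|G|\le 4$ via the torus in $\PGL_2(\bar\KK)$ matches the paper's use of Lemma~\ref{lemma:faithful} in spirit.
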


Given a rational (or meromorphic) map $\phi\colon X\dasharrow Y$ and a
birational (or bimeromorphic) selfmap $g\colon X\dasharrow X$, we say that
$g$ is \emph{fiberwise with respect to $\phi$} if
for every point~$Q$ where
$g$ is defined one has $\phi(g(Q))=\phi(Q)$.
Applying Theorem~\ref{theorem:conic} to rational curve fibrations
without sections, one immediately obtains
the following.

\begin{corollary}[{see the proof of \cite[Theorem~1.5]{BandmanZarhin2017}}]
\label{corollary:conic-bundle}
Let $\Bbbk$ be an algebraically closed field of characteristic zero,
let $X$ and $Y$ be quasi-projective varieties over $\Bbbk$,
and let $\phi\colon X\to Y$ be a fibration whose general
fiber is a rational curve.
Let $G$ be a finite group acting on $X$ by birational maps
so that this action is fiberwise with respect to $\phi$.
Assume that $\phi$ has no
rational sections.
Then every non-trivial element of $G$ has order $2$,
and $G$ is either a trivial
subgroup, or is isomorphic to $\ZZ/2\ZZ$, or is isomorphic to $\ZZ/2\ZZ\times\ZZ/2\ZZ$.
\end{corollary}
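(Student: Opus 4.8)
The plan is to deduce this from Theorem~\ref{theorem:conic} by passing to the generic fiber of $\phi$. Assuming, as one may, that $X$ and $Y$ are irreducible, I would set $\KK=\Bbbk(Y)$; since $\Bbbk$ is algebraically closed of characteristic zero and $\Bbbk\subset\KK$, the field $\KK$ has characteristic zero and contains all roots of $1$, so it satisfies the hypotheses of Theorem~\ref{theorem:conic}. Let $X_\eta$ denote the generic fiber of $\phi$. Because the general fiber of $\phi$ is a rational curve, $X_\eta$ is a geometrically irreducible curve over $\KK$ whose base change to $\overline{\KK}$ is birational to $\PP^1$; hence a smooth projective model $C$ of $X_\eta$ over $\KK$ is a smooth projective geometrically rational curve of genus $0$, that is, a conic over $\KK$.

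Next I would set up the dictionary between the data over $Y$ and the data over $\KK$. A birational selfmap of $X$ that is fiberwise with respect to $\phi$ is in particular a birational selfmap over $Y$, and restriction to the generic fiber defines a group homomorphism $G\to\Bir_\KK(X_\eta)=\Bir_\KK(C)=\Aut_\KK(C)$, where the last equality holds because $C$ is a smooth projective curve. This homomorphism is injective: a birational selfmap of $X$ over $Y$ whose restriction to $X_\eta$ is the identity is the identity on a dense open subset of $X$, hence the identity in $\Bir(X)$. It remains to see that $C$ has no $\KK$-point. If it did, then $C$ would be isomorphic to $\PP^1_\KK$, and since $\KK$ is infinite one could find a $\KK$-point of $C$ in the domain of definition of the birational map $C\dashrightarrow X_\eta$; its image would be a $\KK$-point of $X_\eta$, equivalently a rational section of $\phi$, contradicting our assumption. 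So $C$ is a conic over $\KK$ without $\KK$-points, and $G$ embeds into $\Aut_\KK(C)$; applying Theorem~\ref{theorem:conic} to $C$ and $G$ gives the assertion word for word.

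The argument beyond Theorem~\ref{theorem:conic} is essentially bookkeeping about generic fibers, so I do not expect a serious obstacle. The one point to get right is the passage between rational sections of $\phi$ and rational points of $C$: one should note that only the easy implication is needed — a $\KK$-point of the smooth proper model $C$ yields a rational section of $\phi$ — which avoids having to control the singular or non-proper locus of $X_\eta$ itself and to invoke Lang--Nishimura in the other direction. Checking that the generic fiber really is a geometrically irreducible genus-$0$ curve, and the harmless reduction to irreducible $X$ and $Y$, complete the routine parts of the proof.
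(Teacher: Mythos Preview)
Your argument is correct and is essentially the same as the paper's: pass to the generic fiber $X_\eta$ over $\KK=\Bbbk(Y)$, observe it is a conic without $\KK$-points because $\phi$ has no rational sections, embed $G$ into its automorphism group, and apply Theorem~\ref{theorem:conic}. The paper states this in one paragraph right after the corollary; your version just spells out a few more of the routine details (irreducibility, the smooth projective model, injectivity of the restriction map).
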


To deduce Corollary~\ref{corollary:conic-bundle} from Theorem~\ref{theorem:conic}, one can note that
the group $G$ is a subgroup of the automorphism group
of the generic fiber $X_\eta$ of $\phi$, which is a conic over the function field
$\Bbbk(Y)$. Since $\phi$ has no rational sections, the conic $X_\eta$ has no $\Bbbk(Y)$-points,
and Theorem~\ref{theorem:conic} applies.

Apparently, there is no room for generalizations of Theorem~\ref{theorem:conic} in the case of complex manifolds.
However, the following result provides a natural generalization of
Corollary~\ref{corollary:conic-bundle}.

\begin{theorem}\label{theorem:conic-bundle}
Let $\phi\colon X\to Y$ be a proper surjective holomorphic map of irreducible complex manifolds whose
typical fiber is isomorphic to $\PP^1$.
Suppose that there does not exist a divisor on~$X$
whose intersection with a typical fiber of $\phi$ equals $1$.
Let $G$ be a finite group acting on~$X$ by bimeromorphic maps
so that this action is fiberwise with respect to $\phi$.
Then every non-trivial element of $G$ has order $2$. Moreover, $G$ is either a trivial
subgroup, or is isomorphic to $\ZZ/2\ZZ$, or is isomorphic to $\ZZ/2\ZZ\times\ZZ/2\ZZ$.
\end{theorem}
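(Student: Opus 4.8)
The plan is to separate the statement into two claims: (1) every non-trivial element of $G$ has order $2$; and (2) granting (1), $|G|\le 4$. I regard $G$ as a subgroup of the group of bimeromorphic selfmaps of $X$ that are fiberwise over $\phi$, and I write $Y^{\circ}\subset Y$ for the dense open subset over which $\phi$ is a smooth morphism with all fibers isomorphic to $\PP^1$, and $F$ for a typical fiber. Claim (2) is quick: restricting a fiberwise bimeromorphic map to a general fiber $F\cong\PP^1$ gives a homomorphism $G\to\Aut(F)=\PGL_2(\CC)$, and it is injective because a fiberwise bimeromorphic selfmap that is the identity on a general fiber is the identity on the dense open set $\phi^{-1}(Y^{\circ})$, hence on $X$. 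Once (1) is known, $G$ is elementary abelian of exponent $2$, and no such group of order $8$ embeds into $\PGL_2(\CC)$ (whose finite subgroups are cyclic, dihedral, $A_4$, $S_4$ or $A_5$), so $|G|\le 4$; alternatively one may invoke the last assertion of Theorem~\ref{theorem:conic} for the conic $F$.

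For claim (1) I argue by contradiction: suppose $g\in G$ has order $n\ge 3$, and let me manufacture a divisor on $X$ meeting $F$ in degree $1$. First, $g$ acts on a general fiber $F\cong\PP^1$ with order exactly $n$, since otherwise a proper power of $g$ would act trivially on every general fiber and hence be trivial in $\Bir(X)$. Thus $g|_F$ is conjugate to $z\mapsto\zeta z$ for a primitive $n$-th root of unity $\zeta$; it has exactly two fixed points, and as $n\ge 3$ we have $\zeta\ne\zeta^{-1}$, so the eigenvalue of $dg$ on the tangent line to the fiber distinguishes the two fixed points. Now pass to the graph $\Gamma\subset X\times_Y X$ of the bimeromorphic map $g$ (an irreducible analytic set with proper bimeromorphic first projection to $X$; it lands in $X\times_Y X$ because $g$ is fiberwise, and $X\times_Y X$ is proper over $Y$ since $\phi$ is). Let $Z\subset X$ be the union of those irreducible components of the image of $\Gamma\cap\Delta$ under the first projection that dominate $Y$, where $\Delta\subset X\times_Y X$ is the relative diagonal. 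By Remmert's proper mapping theorem $Z$ is a closed analytic subset of $X$; over the dense open set on which $g$ and all its powers are biregular and $\phi$ is smooth, $Z$ coincides with the fixed locus of $g$, and over a general point of $Y$ it consists of two reduced points. Hence $Z$ is purely of codimension one and $Z\cdot F=2$.

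It remains to split $Z$ into two pieces of degree one. Over the dense open subset $Y^{\star}\subset Y$ of points over which $g$ acts on the fiber with order $n$, consider on $Z$ the map sending $z$ to the eigenvalue of $dg_z$ on the tangent line to the fiber through $z$: it takes values among the $n$-th roots of unity, hence is locally constant, and over each point of $Y^{\star}$ it takes the two reciprocal values $\zeta$ and $\zeta^{-1}$, each exactly once. Since $\zeta\ne\zeta^{-1}$, the two loci where this eigenvalue equals $\zeta$, respectively $\zeta^{-1}$, are disjoint and each maps isomorphically onto its (connected) image in $Y^{\star}$; their closures in $X$ are therefore two distinct prime divisors contained in $Z$, each meeting $F$ in degree $1$ --- contradicting the hypothesis. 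I expect the main obstacle to be exactly this passage from the fixed locus of $g$ over the smooth locus of $\phi$, where it is plainly a pair of disjoint degree-one multisections, to honest analytic divisors on all of $X$: the Euclidean closure of an analytic subset of a dense open set need not be analytic, and it is properness of $\phi$ --- used through the graph $\Gamma\subset X\times_Y X$ and the proper mapping theorem --- that repairs this, while the inequality $n\ge 3$ is what keeps monodromy from swapping the two sheets and so forces the splitting.
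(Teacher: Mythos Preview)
Your proof is correct and follows essentially the same route as the paper: build the closed analytic fixed locus of $g$ as the projection of the intersection of the graph with the diagonal, split it into two degree-one pieces via the tangent eigenvalues $\zeta\ne\zeta^{-1}$ when the order exceeds~$2$, and then bound $|G|$ using the faithful action on a typical fiber $F\cong\PP^1$. Your choice to work in $X\times_Y X$ rather than $X\times X$ (invoking properness of $\phi$ and Remmert's theorem) is a small but apt variation, since the theorem as stated does not assume $X$ compact, whereas the paper's auxiliary Lemma~\ref{lemma:fixed-points} and Lemma~\ref{lemma:two-sections} are formulated under a compactness hypothesis.
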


We also prove an analog of Theorem~\ref{theorem:conic-bundle} for
fiberwise automorphisms, see Lemma~\ref{lemma:non-decomposable-proj} below.

Similarly to \cite{BandmanZarhin2017} where Theorem~\ref{theorem:conic}
was applied to obtain further results concerning groups of birational
automorphisms of higher-dimensional varieties, we use Theorem~\ref{theorem:conic-bundle}
to obtain some results on bimeromorphic automorphisms of
higher-dimensional compact complex manifolds with a structure of a conic bundle.
We refer the reader to Corollaries~\ref{corollary:strongly-Jordan}, \ref{corollary:torus},
\ref{corollary:conic-bundle-section}, and~\ref{corollary:torus-Bim} below for details.

\smallskip
In~\S\ref{section:notation} we set the notation that will be used in the paper.
In~\S\ref{section:preliminaries} we collect several auxilairy results.
In~\S\ref{section:conic-bundles} we study fixed points of fiberwise bimeromorphic maps and prove Theorem~\ref{theorem:conic-bundle}.
In~\S\ref{section:Jordan} we apply Theorem~\ref{theorem:conic-bundle} to study Jordan property
for groups of bimeromorphic maps of certain complex manifolds.

\smallskip
I am grateful to T.\,Bandman, A.\,Efimov, A.\,Kuznetsov,
Yu.\,Prokhorov, and Yu.\,Zarhin for useful discussions.

\section{Notation}
\label{section:notation}

In this paper all complex manifolds are assumed to be irreducible.
We refer the reader to \cite{GPR}
for the basic facts on complex manifolds (and complex analytic spaces).

A proper surjective holomorphic map $h\colon X\to Y$ of complex manifolds
is called a \emph{modification} if
there exist closed analytic subsets $V\subsetneq X$ and $W\subsetneq Y$
such that $h$ restricts to a biholomorphic map $X\setminus V\to Y\setminus W$.
A \emph{meromorphic} map $f\colon X\dasharrow Y$ is defined by the closure of its graph $\Gamma_f\subset X\times Y$,
that is assumed to be a proper closed analytic subset of $X\times Y$ such that the projection
$\Gamma_f\to X$ is a modification. The meromorphic map $f$ is said to be
\emph{bimeromorphic} if the projection $\Gamma_f\to Y$ is a modification as well.
We refer the reader to \cite[\S7.1]{GPR} for more details concerning these definitions.

Let $\phi\colon X\to Y$ be a holomorphic map
of complex manifolds. By a \emph{fiber} of $\phi$ over a point~\mbox{$P\in Y$}
we mean the
(possibly non-reduced) complex analytic space $\phi^*P$.
For instance, if we say that all fibers of $\phi$ are isomorphic to $\PP^1$, we mean that
they are reduced and isomorphic to $\PP^1$ with the most usual reduced structure.
A  \emph{section} of $\phi$ is a closed analytic
subset~\mbox{$D\subset X$} that intersects every fiber of $\phi$
by a single point. By a \emph{$\PP^1$-bundle} we mean a holomorphic map
of complex manifolds whose fibers are
isomorphic to $\PP^1$; note that a $\PP^1$-bundle is automatically locally
trivial.

A holomorphic map of complex manifolds $\phi\colon X\to Y$ is called
\emph{proper} if
the preimage of any compact subset of $Y$ is compact. The map $\phi$ is proper if and only if
it is closed and has compact fibers. If $\phi$ is surjective, it is enough
to require that its fibers are compact.

Given a complex manifold $X$, by $\Bim(X)$ we denote its group of bimeromorphic
selfmaps, and by $\Aut(X)$ we denote its group of biholomorphic
selfmaps.
Given a holomorphic map~\mbox{$\phi\colon X\to Y$} of complex manifolds
and a bimeromorphic map $g\colon X\dasharrow X$, we say that $\phi$ is
\emph{$g$-equivariant} if there exist a bimeromorphic map
$g_Y\colon Y\dasharrow Y$ such that~\mbox{$\phi\circ g=g_Y\circ \phi$}
as meromorphic
maps from $X$ to $Y$.
By $\Bim(X;\phi)$ we denote the subgroup of~\mbox{$\Bim(X)$} that consists of all
selfmaps $g$ such that $\phi$ is $g$-equivariant.
By~\mbox{$\Bim(X)_\phi$} we denote the subgroup of $\Bim(X; \phi)$ that consists of all selfmaps
whose action is fiberwise with respect to $\phi$; more precisely, we require that for every point $Q$ where
$g$ is defined one has $\phi(g(Q))=\phi(Q)$.
We set
$$
\Aut(X;\phi)=\Bim(X;\phi)\cap\Aut(X)
$$
and~\mbox{$\Aut(X)_\phi=\Bim(X)_\phi\cap\Aut(X)$}.

For a meromorphic map $g\colon X\dasharrow X$,
we denote by $I(g)$ the set of points in $X$ where $g$ is not holomorphic.
By $\Fix(g)$ we denote the set of points $x\in X$ such that $g$ is holomorphic at $x$
and $g(x)=x$.

By $T_P(X)$ we denote the tangent space to a complex manifold $X$ at
a point $P\in X$.
By $\OOO_X$ we denote the sheaf of holomorphic functions on $X$.

A \emph{typical point} of a complex manifold $X$ is a  point outside a proper closed analytic
subset of $X$. A \emph{typical fiber} of a holomorphic (or meromorphic) map
is a fiber over a typical point in the image.

\section{Preliminaries}
\label{section:preliminaries}

In this section we collect several auxiliary results that will be used in the proof
of Theorem~\ref{theorem:conic-bundle} and related statements.

Recall that if $X$ is a complex manifold, $U$ is a dense open subset of
$X$, and $Z\subset U$ is a closed analytic subset of $U$, then the closure
$\bar{Z}$ of $Z$ in $X$ may be much larger than $Z$; in particular, $Z$ may be not dense
in $\bar{Z}$. However, for certain subsets the situation here is still
nice enough.

\begin{lemma}\label{lemma:fixed-points}
Let $X$ be a compact complex manifold,
and let $g\colon X\dasharrow X$ be a meromorphic map.
Then there exists a closed analytic subset
$\overline{\Fix}(g)$ of $X$ such that $\Fix(g)$ is a dense open subset
of $\overline{\Fix}(g)$.
\end{lemma}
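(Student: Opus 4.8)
The plan is to exhibit $\Fix(g)$ as the image under a proper holomorphic map of an analytic subset of the graph of $g$, so that its closure becomes analytic by Remmert's proper mapping theorem. Let $\Gamma_g \subset X \times X$ be the closure of the graph of $g$, with projections $\pi_1,\pi_2 \colon \Gamma_g \to X$. Since $X$ is compact, $\Gamma_g$ is compact (being a closed subset of $X\times X$), so $\pi_1$ is proper and closed. The diagonal $\Delta \subset X \times X$ is a closed analytic subset, hence so is $F := (\pi_1,\pi_2)^{-1}(\Delta) = \{z \in \Gamma_g \mid \pi_1(z) = \pi_2(z)\}$ in $\Gamma_g$. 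Set $U := X \setminus I(g)$ and $E := \pi_1^{-1}(I(g))$; then $\Gamma_g \setminus E = \pi_1^{-1}(U)$, and recall from the definition of a meromorphic map (see \cite[\S7.1]{GPR}) that $\pi_1$ restricts to a biholomorphism from $\Gamma_g \setminus E$ onto $U$, with $g|_U = \pi_2 \circ (\pi_1|_{\Gamma_g \setminus E})^{-1}$. It follows that $\pi_1$ maps $F \setminus E$ bijectively onto $\Fix(g)$; in particular $\Fix(g) = \pi_1(F \setminus E)$.

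Next I would pass to the closure $\overline{F \setminus E}$ of $F \setminus E$ in $\Gamma_g$. Since $F$ and $E$ are closed analytic, this closure is the union of those irreducible components of $F$ that are not contained in $E$; thus it is a closed analytic subset of $\Gamma_g$, it contains $F \setminus E$ as a dense subset, and it satisfies $(\overline{F \setminus E}) \setminus E = F \setminus E$ because $\overline{F \setminus E} \subseteq F$. Set $\overline{\Fix}(g) := \pi_1\bigl(\overline{F \setminus E}\bigr)$, which is a closed analytic subset of $X$ by Remmert's proper mapping theorem. As $\pi_1$ is continuous and closed and carries the dense subset $F \setminus E$ of $\overline{F \setminus E}$ onto $\Fix(g)$, one gets $\overline{\Fix}(g) = \overline{\Fix(g)}$, so $\Fix(g)$ is dense in $\overline{\Fix}(g)$; and applying $\pi_1$ to the identity $(\overline{F \setminus E}) \setminus E = F \setminus E$ (together with $f(S)\cap T = f(S\cap f^{-1}(T))$) yields $\overline{\Fix}(g) \cap U = \Fix(g)$, so $\Fix(g)$ is open in $\overline{\Fix}(g)$ because $U$ is open in $X$.

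These verifications are routine; the one substantive point — and the reason to route the argument through the graph rather than to work directly with the analytic subset $\Fix(g)$ of the open set $U$ — is that the closure in $X$ of an analytic subset of $U$ need not be analytic in general, whereas the proper mapping theorem makes $\pi_1(\overline{F \setminus E})$ analytic for free. The only bookkeeping that requires attention is the removal of the irreducible components of $F$ contained in $E$: these would otherwise add to $\pi_1(F)$ extra pieces lying over $I(g)$, in which $\Fix(g)$ need not be dense.
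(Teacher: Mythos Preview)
Your proof is correct and follows the same route as the paper: intersect the graph $\Gamma_g$ with the diagonal $\Delta$ and project via $\mathrm{pr}_1$, using compactness of $X$ (hence properness of the projection) to get a closed analytic image. The paper simply sets $\overline{\Fix}(g)=\mathrm{pr}_1(\Gamma_g\cap\Delta)$ and notes $\Fix(g)=\overline{\Fix}(g)\cap U$, whereas you first discard the irreducible components of $\Gamma_g\cap\Delta$ lying over $I(g)$; this extra bookkeeping is exactly what makes the density claim airtight, a point the paper leaves implicit.
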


\begin{proof}
Consider the graph $\Gamma_g\subset X\times X$ of the map $g$.
Let $\Delta\subset X\times X$ be the diagonal, and let $\mathrm{pr}_1\colon X\times X\to X$
be the projection to the first factor. Set
$$
\overline{\Fix}(g)=\mathrm{pr}_1\big(\Gamma_g\cap\Delta\big).
$$
Then $\overline{\Fix}(g)$ is a closed analytic subset of $X$.

The set $I(g)$ of indeterminacy points of $g$ is a closed analytic subset of $X$.
Let~\mbox{$U=X\setminus I(g)$}. Then $U$ is a dense open subset in $X$.
It remains to notice that
$$
\Fix(g)=\overline{\Fix}(g)\cap U,
$$
which implies that $\Fix(g)$ is open and dense in $\overline{\Fix}(g)$.
\end{proof}

\begin{lemma}\label{lemma:faithful}
Let $\phi\colon X\to Y$ be a
surjective holomorphic map of compact complex manifolds whose
typical fiber has dimension $1$.
Let $G\subset\Bim(X)_\phi$ be a finite subgroup.
Then~$G$ acts by holomorphic maps in a neighborhood of
a typical fiber of~$\phi$, and this action is faithful
on a typical fiber of~$\phi$.
\end{lemma}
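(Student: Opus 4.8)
The plan is to establish the holomorphicity claim first and then the faithfulness claim. For the first part, fix an element $g\in G$ and consider its indeterminacy locus $I(g)$, which is a closed analytic subset of $X$; since $g$ is fiberwise, $\phi(I(g))$ is contained in a proper closed analytic subset $B_g$ of $Y$ (it cannot be all of $Y$, because over a typical point of $Y$ the restriction of $g$ to the fiber $\PP^1$ is a birational, hence biregular, selfmap of $\PP^1$, so $g$ is defined at the typical fiber). Taking the union $B=\bigcup_{g\in G} B_g$ over the finitely many elements of $G$, and discarding also the images of the singular fibers, we obtain a proper closed analytic subset of $Y$ over whose complement every $g\in G$ restricts to a holomorphic — in fact biholomorphic — selfmap of each fiber $\PP^1$. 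One then needs to promote this fiberwise holomorphicity to holomorphicity on a neighbourhood of a typical fiber inside $X$: by Lemma~\ref{lemma:fixed-points}-style reasoning, or more directly, the indeterminacy locus $I(g)$ has codimension at least $2$ in $X$, so its image $\phi(I(g))$ has positive codimension in $Y$; enlarging $B$ once more to contain it, we get that over $Y\setminus B$ the map $g$ is genuinely holomorphic as a map of complex manifolds (not merely fiberwise). Since $Y\setminus B$ is open, this is an open neighbourhood of every fiber over a typical point, which is what is claimed.

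For the faithfulness part, suppose $g\in G$ acts trivially on a typical fiber $F\cong\PP^1$ of $\phi$. The set of points of $X$ fixed by $g$ is, by Lemma~\ref{lemma:fixed-points}, a dense open subset of a closed analytic subset $\overline{\Fix}(g)\subset X$. If $g$ fixes one typical fiber pointwise, then by the rigidity of the situation it fixes a typical fiber pointwise for a whole neighbourhood: more precisely, $\overline{\Fix}(g)$ contains a typical fiber, hence contains the full preimage $\phi^{-1}(Z)$ for some positive-dimensional $Z$, and since $\overline{\Fix}(g)$ is a proper closed analytic subset unless $g=\mathrm{id}$, we must compare dimensions. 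The cleanest argument is as follows: the locus in $Y$ over which $g$ acts trivially on the fiber is analytically constructible and, by a connectedness/identity-principle argument applied along sections or local trivialisations over $Y\setminus B$, is closed; if it is nonempty and $Y$ is connected (which it is, $X$ being irreducible and $\phi$ surjective), it is all of $Y\setminus B$, so $g$ acts trivially on every typical fiber; but then $g$ fixes a dense open subset of $X$ pointwise, and since $g$ is holomorphic there and $X$ is connected, $g=\mathrm{id}$ on $X$ by the identity theorem.

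The main obstacle I expect is the passage from \emph{fiberwise holomorphicity} to \emph{honest holomorphicity on a neighbourhood of a typical fiber} in the first part — that is, controlling $I(g)$ well enough to know its $\phi$-image omits a typical point. This requires using that $I(g)$ has codimension $\ge 2$ in $X$ (a standard fact for meromorphic maps from a manifold, since the graph $\Gamma_g\to X$ is a modification and the exceptional locus of a modification from a manifold has codimension $\ge 2$), combined with the properness of $\phi$ to ensure $\phi(I(g))$ is a closed analytic subset; one must check it is \emph{proper}, which follows because a general fiber meets $I(g)$ in a set of dimension $\le \dim F - 1 = 0$, yet $g|_F$ being an automorphism of $\PP^1$ forces $F\cap I(g)=\varnothing$ for typical $F$. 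The faithfulness part is comparatively soft once one observes that a holomorphic selfmap fixing a nonempty open set pointwise must be the identity on a connected manifold.
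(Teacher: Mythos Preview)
Your holomorphicity argument is essentially the paper's: the indeterminacy locus $I(g)$ has codimension $\ge 2$ while fibers have dimension~$1$, so $\phi(I(g))$ misses a typical point. (Two minor slips: you write ``the fiber $\PP^1$'' but the lemma only assumes typical fibers are one-dimensional; and your first sentence justifying $\phi(I(g))\subsetneq Y$ is circular --- you are assuming $g$ restricts to the fiber in order to prove $g$ is defined there --- but the codimension argument you give afterwards is the correct one.)

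The faithfulness argument, however, contains a genuine gap. You assert that the locus in $Y\setminus B$ over which $g$ acts trivially on the fiber is \emph{closed}, and then conclude that ``if it is nonempty and $Y$ is connected\ldots it is all of $Y\setminus B$''. But nonempty closed subsets of connected spaces need not be everything; you would need the locus to be \emph{open} as well. The identity principle does not give you openness here: for instance the fiberwise holomorphic map $(z,[w_0:w_1])\mapsto (z,[w_0:w_1+zw_0])$ on $\CC\times\PP^1$ is the identity exactly over $z=0$. Your approach can in fact be rescued using that $g$ has finite order --- the identity is an isolated point of $\{A\in\Aut(\PP^1):A^n=\mathrm{id}\}$, so in a local trivialisation the trivial locus is open --- but you do not supply this step, and without it the argument fails.

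The paper takes a more direct route and avoids this issue entirely. After restricting so that all fibers are one-dimensional (hence $\phi$ is open), for each nontrivial $g\in G$ it sets $W_g=Y\setminus\phi\big(X\setminus\overline{\Fix}(g)\big)$. Openness of $\phi$ makes $W_g$ closed; since $g\neq\mathrm{id}$ one has $\overline{\Fix}(g)\neq X$, so $W_g\neq Y$. Over $Y\setminus(Z_g\cup W_g)$ the element $g$ is holomorphic and acts nontrivially on each fiber (the fiber contains a point outside $\overline{\Fix}(g)$). Taking the finite union over $g\in G$ finishes the proof. No connectedness or identity-principle argument is needed; the contrapositive you attempt (``trivial on one fiber $\Rightarrow$ trivial everywhere'') is never invoked.
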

\begin{proof}
Restricting to the preimage of the complement to a suitable proper closed
analytic subset in $Y$, we may assume that all fibers of $\phi$ are
one-dimensional, see~\mbox{\cite[Theorem~2.1.19]{GPR}}.
This implies that the map $\phi$ is open by~\mbox{\cite[Theorem~2.1.18]{GPR}}.

Choose a non-trivial element $g\in G$. Then the set $I(g)$ of indeterminacy points of~$g$ is a
closed analytic subset of codimension at least
$2$ in $X$, see for instance~\mbox{\cite[Remark~7.1.8(1)]{GPR}}.
Since the dimension of a fiber of $\phi$ is $1$,
the image~$\phi(I(g))$ is contained in a proper closed analytic subset $Z_{g}$ of~$Y$.
Furthermore, we know from Lemma~\ref{lemma:fixed-points} that
there exists a proper closed analytic subset~\mbox{$\overline{\Fix}(g)$} of $X$ that contains~\mbox{$\Fix(g)$}.
Since $\phi$ is open, the set
$$
U_g=\phi\big(X\setminus\overline{\Fix}(g)\big)
$$
is open in $Y$, so that the set $W_g=Y\setminus U_g$ is closed.
Therefore, the action of $g$ is holomorphic and non-trivial on the fibers of $\phi$ over all points
of~\mbox{$Y\setminus (Z_g\cup W_g)$}. Since the group $G$ is finite,
the assertion of the lemma follows.
\end{proof}

The following result is well-known, see for instance~\cite[Theorem~2.2.13]{GPR}.

\begin{theorem}\label{theorem:flat}
Let $\phi\colon X\to Y$ be a holomorphic map of complex manifolds.
Suppose that all fibers of $\phi$ have dimension equal to~\mbox{$\dim X-\dim Y$}.
Then $\phi$ is flat.
\end{theorem}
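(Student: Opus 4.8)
The statement is the complex-analytic form of ``miracle flatness'', and the plan is to deduce it from the corresponding local criterion in commutative algebra. Recall that flatness of a holomorphic map is a stalkwise condition: $\phi$ is flat if and only if for every point $x\in X$, with $y=\phi(x)$, the local ring $\OOO_{X,x}$ is flat as a module over $\OOO_{Y,y}$ via the induced local homomorphism $\OOO_{Y,y}\to\OOO_{X,x}$. So I would fix such a point $x$ and work entirely at the level of these local rings. Since $X$ and $Y$ are complex manifolds, $\OOO_{X,x}$ and $\OOO_{Y,y}$ are isomorphic to rings of convergent power series $\CC\{z_1,\dots,z_n\}$ and $\CC\{w_1,\dots,w_m\}$, with $n=\dim X$ and $m=\dim Y$; these are Noetherian regular local rings of Krull dimensions $n$ and $m$, so in particular $\OOO_{X,x}$ is Cohen--Macaulay (see e.g.~\cite{GPR} for these standard facts). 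Writing $\mathfrak{m}_{Y,y}$ for the maximal ideal of $\OOO_{Y,y}$, the quotient $\OOO_{X,x}/\mathfrak{m}_{Y,y}\OOO_{X,x}$ is precisely the local ring at $x$ of the fiber $\phi^{-1}(y)$, so its Krull dimension equals the local dimension $\dim_x\phi^{-1}(y)$ of that fiber at $x$.

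Next I would carry out the dimension count. On one hand, $\mathfrak{m}_{Y,y}$ is generated by $m$ elements, so by Krull's principal ideal theorem $\dim\bigl(\OOO_{X,x}/\mathfrak{m}_{Y,y}\OOO_{X,x}\bigr)\ge n-m$. On the other hand this dimension is $\dim_x\phi^{-1}(y)$, which cannot exceed the dimension of the whole analytic space $\phi^{-1}(y)$, and the latter equals $\dim X-\dim Y=n-m$ by hypothesis (the fiber $\phi^{-1}(y)$ is nonempty, as it contains $x$). Hence $\dim\bigl(\OOO_{X,x}/\mathfrak{m}_{Y,y}\OOO_{X,x}\bigr)=n-m$, that is,
$$
\dim\OOO_{X,x}=\dim\OOO_{Y,y}+\dim\bigl(\OOO_{X,x}/\mathfrak{m}_{Y,y}\OOO_{X,x}\bigr).
$$
At this point I would invoke the local flatness criterion (``miracle flatness''): for a local homomorphism $A\to B$ of Noetherian local rings with $A$ regular and $B$ Cohen--Macaulay, the equality $\dim B=\dim A+\dim(B/\mathfrak{m}_A B)$ forces $B$ to be flat over $A$. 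Applying this with $A=\OOO_{Y,y}$ and $B=\OOO_{X,x}$ gives that $\OOO_{X,x}$ is flat over $\OOO_{Y,y}$. Since $x\in X$ was arbitrary, $\phi$ is flat.

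The substantive inputs here are the two facts quoted wholesale: that stalks of the structure sheaf of a manifold are regular local rings, and the local flatness criterion. Granting these, the only point that needs genuine care is the middle step — passing from the hypothesis, which a priori constrains only the dimension of each fiber as a whole, to the \emph{pointwise} equality $\dim_x\phi^{-1}(\phi(x))=\dim X-\dim Y$ for every $x\in X$. It is this equality (rather than merely the inequality ``$\le$'' that the stated hypothesis gives directly) which turns the Krull-type inequality into an equality of dimensions, and hence is what makes the flatness criterion applicable; combining it with the opposite Krull inequality is exactly the mechanism of the argument.
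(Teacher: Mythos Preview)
Your argument is correct; this is exactly the standard ``miracle flatness'' proof. The paper does not actually prove this statement but simply records it as well-known with a reference to \cite[Theorem~2.2.13]{GPR}, so there is nothing further to compare --- your argument is essentially the one that underlies the cited result.
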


\begin{corollary}\label{corollary:flat}
Let $\phi\colon X\to Y$ be a proper holomorphic map of complex manifolds.
Suppose that all fibers of $\phi$ have dimension equal to~\mbox{$\dim X-\dim Y$}.
Let $\LLL$ be a vector bundle on $X$, and let $\EEE=\phi_*\LLL$.
Then $\EEE$ is a vector bundle on $Y$.
\end{corollary}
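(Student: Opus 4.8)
The plan is to reduce the assertion, which is local on $Y$, to a statement about a finite complex of vector bundles, and then to exploit reflexivity. Since local freeness is a local property, I may shrink $Y$ to a small (Stein) neighborhood of any given point $y_0$. First I would invoke Theorem~\ref{theorem:flat}: the hypothesis that every fiber has dimension exactly $\dim X-\dim Y$ forces $\phi$ to be flat, so the vector bundle $\LLL$ is flat over $Y$. As $\phi$ is also proper, Grauert's direct image and base-change theorems apply, and after shrinking $Y$ the derived pushforward $R\phi_*\LLL$ is represented by a bounded complex $K^\bullet=[\,K^0\to K^1\to\dots\to K^d\,]$ of finite-rank vector bundles on $Y$, where $d=\dim X-\dim Y$, in such a way that for every $P\in Y$ the cohomology $H^i(X_P,\LLL|_{X_P})$ is computed by $K^\bullet\otimes k(P)$. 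In particular $\EEE=\phi_*\LLL$ is the zeroth cohomology sheaf of $K^\bullet$, that is, $\EEE=\ker(K^0\to K^1)$.

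Two structural observations come next. Because $\EEE$ is the kernel of a morphism of vector bundles, it is a \emph{reflexive} coherent sheaf. It is moreover torsion-free for a transparent reason worth isolating on its own: a section of $\EEE$ over an open set $U\subset Y$ is the same thing as a holomorphic section of $\LLL$ over $\phi^{-1}(U)$, and since $X$ is an irreducible manifold and $\phi$ is surjective, a section of $\LLL$ vanishing along the fibers over a dense open subset of $U$ must vanish identically; hence $\EEE$ has no torsion, and its rank equals the generic value $r$ of the function $P\mapsto\dim_{\CC}H^0(X_P,\LLL|_{X_P})$. Away from the analytic locus where this dimension jumps, the base-change map $\EEE\otimes k(P)\to H^0(X_P,\LLL|_{X_P})$ is surjective, so $\EEE$ is already locally free of rank $r$ there.

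The remaining, and main, difficulty is to establish local freeness across the jumping locus. Here I would lean on the reflexivity of $\EEE$: on a complex manifold a reflexive coherent sheaf is locally free outside a closed analytic subset of codimension at least $3$. This immediately settles the corollary whenever $\dim Y\le 2$, and in general reduces the problem to analyzing $\EEE$ in codimension $\ge 3$ along the locus where $\dim H^0(X_P,\LLL|_{X_P})$ jumps. I expect this to be the hardest point, since the base-change map $\EEE\otimes k(P)\to H^0(X_P,\LLL|_{X_P})$ need not be surjective there, and one must argue that no genuine failure of local freeness survives. In the situation relevant to the paper, where the typical fiber is $\PP^1$ and $\LLL$ restricts to each fiber with vanishing first cohomology, the local constancy of the Euler characteristic forces $\dim H^0(X_P,\LLL|_{X_P})$ to be constant, so that there is no jumping locus at all and the base-change map is everywhere surjective; combined with the reflexivity established above, this yields that $\EEE$ is a vector bundle on all of $Y$.
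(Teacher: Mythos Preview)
Your argument does not prove the corollary as stated. You establish that $\EEE$ is coherent, torsion-free, and reflexive (as the kernel of the first differential in a Grauert complex), and this indeed settles the case $\dim Y\le 2$; but for $\dim Y\ge 3$ you explicitly leave the main step open and retreat to the special situation---fibers isomorphic to $\PP^1$ with $\LLL$ of positive fiberwise degree---that happens to be the one used later in the paper. Reflexivity alone cannot close the gap in higher dimension, and controlling the jump locus of $P\mapsto\dim H^0(X_P,\LLL|_{X_P})$ is precisely the difficulty the corollary is meant to bypass; so as a proof of the corollary in full generality this is incomplete.

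The paper's proof is much shorter and avoids any stratification of $Y$. The idea you are missing is to argue with \emph{flatness} directly rather than with its weaker consequence, reflexivity. Since $\LLL$ is locally free it is a flat $\OOO_X$-module, and since $\phi$ is flat (by the preceding theorem) $\LLL$ is flat over $\OOO_Y$ as well. Grauert's coherence theorem gives that $\EEE=\phi_*\LLL$ is coherent, and the paper then cites a result from \cite{GPR} to conclude that $\EEE$ is itself a flat coherent $\OOO_Y$-sheaf. On a complex manifold a coherent sheaf is flat if and only if it is locally free, so $\EEE$ is a vector bundle with no further work. In short: replace ``$\EEE$ is reflexive'' by ``$\EEE$ is flat over $Y$'', and the codimension-$3$ obstruction disappears.
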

\begin{proof}
By construction, $\LLL$ is a flat coherent sheaf on $X$.
Hence $\EEE$ is a coherent sheaf on~$Y$, see for instance~\cite[Theorem~3.4.1]{GPR}.
Moreover, since $\phi$ is flat by Theorem~\ref{theorem:flat},
we conclude that~$\EEE$ is a flat coherent sheaf on $Y$, see~\cite[Proposition~2.2.6(2)]{GPR}.
This means that $\EEE$ is a vector
bundle on~$Y$, see~\cite[Proposition~2.2.6(3)]{GPR}.
\end{proof}

The following lemma will be used in~\S\ref{section:Jordan}.

\begin{lemma}\label{lemma:projectivization}
Let $X$ and $Y$ be complex manifolds, and let $\phi\colon X\to Y$ be a $\PP^1$-bundle.
Suppose that there is a divisor
on $X$ whose intersection number with
a fiber of $\phi$ equals~$1$.
Then $X$ is isomorphic to a projectivization of a rank $2$ vector bundle on~$Y$.
Moreover, if there exist two disjoint sections of $\phi$, then this vector bundle is decomposable.
\end{lemma}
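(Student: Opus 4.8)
The plan is to exhibit $X$ as the projectivization of $\EEE=\phi_*\LLL$ for a well-chosen line bundle $\LLL$ on $X$. Let $D$ be a divisor on $X$ with $D\cdot F=1$, where $F$ denotes a fiber of $\phi$, and put $\LLL=\OOO_X(D)$. Being a $\PP^1$-bundle, $\phi$ is locally trivial with compact fibers, hence proper, and all of its fibers have dimension $1=\dim X-\dim Y$; so Corollary~\ref{corollary:flat} shows that $\EEE=\phi_*\LLL$ is a vector bundle on $Y$. For any fiber $F\cong\PP^1$ the restriction $\LLL|_F$ is a line bundle of degree $D\cdot F=1$, hence $\LLL|_F\cong\OOO_{\PP^1}(1)$; in particular $H^1(F,\LLL|_F)=0$ and $\dim H^0(F,\LLL|_F)=2$. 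Applying the base change theorem to the flat sheaf $\LLL$, one identifies the fiber of $\EEE$ over a point $y\in Y$ with $H^0\big(\phi^{-1}(y),\LLL|_{\phi^{-1}(y)}\big)$, so $\EEE$ has rank $2$.

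Next I would use the counit $\phi^*\EEE=\phi^*\phi_*\LLL\to\LLL$ of the adjunction. Its restriction to a fiber $F$ is the evaluation map $H^0(F,\LLL|_F)\otimes\OOO_F\to\LLL|_F$, which is surjective because $\OOO_{\PP^1}(1)$ is globally generated; by Nakayama's lemma the counit is then surjective everywhere. A surjection from the rank $2$ bundle $\phi^*\EEE$ onto the line bundle $\LLL$ determines a morphism $\psi\colon X\to\PP(\EEE)$ over $Y$. On each fiber, $\psi$ is the morphism $\PP^1\to\PP^1$ attached to the complete linear system $|\OOO_{\PP^1}(1)|$, hence an isomorphism; in particular $\psi$ is bijective. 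Since a bijective holomorphic map of complex manifolds is biholomorphic, $\psi$ identifies $X$ with $\PP(\EEE)$, which proves the first assertion. (Alternatively, locally over $Y$ both $X$ and $\PP(\EEE)$ are trivial $\PP^1$-bundles and $\psi$ is given there by a holomorphic family of elements of $\PGL_2(\CC)$, hence is locally biholomorphic.)

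For the second assertion, let $S_1$ and $S_2$ be disjoint sections of $\phi$. Under the identification $X\cong\PP(\EEE)$, each $S_i$ is cut out by a line subbundle $\mathscr{K}_i\subset\EEE$, whose fiber at $y$ is the line in $\EEE_y$ corresponding to the point $S_i\cap\phi^{-1}(y)$ of $\PP(\EEE_y)$. Disjointness of $S_1$ and $S_2$ means that for every $y$ the lines $\mathscr{K}_{1,y}$ and $\mathscr{K}_{2,y}$ are distinct, so that $\mathscr{K}_{1,y}\oplus\mathscr{K}_{2,y}=\EEE_y$. Hence the natural morphism $\mathscr{K}_1\oplus\mathscr{K}_2\to\EEE$ is an isomorphism of vector bundles, and $\EEE$ is decomposable.

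The only genuinely delicate points are the analytic base change statement ensuring that $\EEE$ has rank exactly $2$, and the verification that $\psi$ restricts to an isomorphism on each fiber; granting those, the remainder of the argument is formal.
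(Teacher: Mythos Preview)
Your argument for the first assertion is essentially the paper's: take $\LLL=\OOO_X(D)$, push forward, invoke Corollary~\ref{corollary:flat}, and produce the map $\psi\colon X\to\PP_Y(\EEE)$ which is a fiberwise isomorphism. The paper is terser about the construction of $\psi$ and about why it is an isomorphism, but the content is the same.

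For the second assertion the approaches genuinely diverge. You use the correspondence between sections of $\PP(\EEE)\to Y$ and line subbundles of $\EEE$: disjoint sections give fiberwise distinct lines $\mathscr{K}_{1,y},\mathscr{K}_{2,y}\subset\EEE_y$, hence $\mathscr{K}_1\oplus\mathscr{K}_2\cong\EEE$. The paper instead takes $D=D_1$ so that $\EEE=\phi_*\OOO_X(D_1)$, writes the Koszul resolution of the empty intersection $D_1\cap D_2$, tensors by $\OOO_X(D_1)$, pushes forward, and uses the vanishing $\phi_*\OOO_X(-D_2)=R^1\phi_*\OOO_X(-D_2)=0$ (from $\OOO_X(-D_2)|_F\cong\OOO_{\PP^1}(-1)$) to obtain an explicit splitting
\[
\OOO_Y\oplus\phi_*\OOO_X(D_1-D_2)\;\cong\;\phi_*\OOO_X(D_1).
\]
Your route is shorter and more geometric, and avoids any cohomological input beyond what was already used in the first part; the paper's route is cohomological but has the advantage of naming the summands concretely. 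Both are correct.
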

\begin{proof}
Let $D$ be a divisor  on $X$ whose intersection number
with a fiber of $\phi$ equals $1$. It defines a line bundle $\LLL=\OOO_X(D)$ on $X$.
Let $\EEE=\phi_*\LLL$.
Then $\EEE$ is a vector bundle on~$Y$ by Corollary~\ref{corollary:flat}.
Since the degree of the restriction $\LLL\vert_F$ equals $1$, where $F\cong\PP^1$ is a fiber
of $\phi$, the rank of $\EEE$ equals~$2$.
There is a natural holomorphic map $\psi\colon X\to \P_Y(\EEE)$, which
commutes with the projection on $Y$ and induces isomorphisms on fibers.
Thus $\psi$ is an isomorphism.

Now suppose that there exist two disjoint sections $D_1$ and $D_2$ of $\phi$.
Then $D_1$ and $D_2$ are (effective) divisors on $X$.
Consider the Koszul resolution
of the intersection $D_1\cap D_2=\varnothing$:
$$
0\to \OOO_X(-D_1-D_2)\to\OOO_X(-D_1)\oplus\OOO_X(-D_2)\to \OOO_X\to 0.
$$
Tensoring it with $\OOO_X(D_1)$ and taking a push-forward by $\phi$ we get
$$
0\to \phi_*\OOO_X(-D_2)\to \phi_*\OOO_X\oplus\phi_*\OOO_X(D_1-D_2)\to\phi_*\OOO_X(D_1)\to R^1\phi_*\OOO_X(-D_2).
$$
Note that $\phi_*\OOO_X=\OOO_Y$. Since
$\OOO_X(-D_2)$ restricts to $F\cong\PP^1$ as $\OOO_{\PP^1}(-1)$, we know that
$$
\phi_*\OOO_X(-D_2)=R^1\phi_*\OOO_X(-D_2)=0.
$$
Therefore, we obtain an isomorphism
$$
\OOO_Y\oplus\phi_*\OOO_X(D_1-D_2)\stackrel{\sim}\longrightarrow\phi_*\OOO_X(D_1).
$$
Note that $\OOO_X(D_1-D_2)$ restricts to $F$ as $\OOO_F$, so that
$\phi_*\OOO_X(D_1-D_2)$ is a line bundle on $Y$.
This means that $\EEE=\phi_*\OOO_X(D_1)$ is a decomposable vector bundle
of rank $2$ on~$Y$. It remains to recall from the first part of the proof that $X$ is
isomorphic to the projectivization of~$\EEE$.
\end{proof}

Similarly to Lemma~\ref{lemma:projectivization} one proves the following.

\begin{lemma}\label{lemma:conic-bundle-section}
Let $X$ and $Y$ be complex manifolds.
Let $\phi\colon X\to Y$ be a proper holomorphic map whose fibers are one-dimensional and
whose typical fiber is isomorphic to~$\PP^1$.
Suppose that there is a divisor $D$ on $X$ such that the intersection number of $D$ with
a fiber of $\phi$ equals $1$.
Then $X$ is bimeromorphic to a projectivization of a rank $2$ vector bundle on~$Y$.
\end{lemma}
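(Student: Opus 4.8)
The plan is to mimic the proof of Lemma~\ref{lemma:projectivization}, the only new feature being that the natural map to the projectivization is now merely bimeromorphic rather than biholomorphic. First I would set $\LLL=\OOO_X(D)$. Since the fibers of $\phi$ are one-dimensional, we have $\dim X-\dim Y=1$, so $\phi$ is flat by Theorem~\ref{theorem:flat}; hence Corollary~\ref{corollary:flat} shows that $\EEE=\phi_*\LLL$ is a vector bundle on $Y$.

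Next I would identify the rank of $\EEE$. Let $Y^\circ\subset Y$ be the complement of the proper closed analytic subset over which the fiber of $\phi$ is not isomorphic to $\PP^1$, set $X^\circ=\phi^{-1}(Y^\circ)$, and let $\phi^\circ\colon X^\circ\to Y^\circ$ be the induced map, which is a $\PP^1$-bundle by the discussion in~\S\ref{section:notation}. The divisor $D^\circ=D\cap X^\circ$ meets each fiber of $\phi^\circ$ with multiplicity $1$, so Lemma~\ref{lemma:projectivization} (and its proof) applies: $\phi^\circ_*(\LLL\vert_{X^\circ})$ is a rank $2$ vector bundle on $Y^\circ$, and $X^\circ$ is isomorphic over $Y^\circ$ to $\P_{Y^\circ}(\phi^\circ_*(\LLL\vert_{X^\circ}))$. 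Since push-forward is local on the base, $\phi^\circ_*(\LLL\vert_{X^\circ})=\EEE\vert_{Y^\circ}$; in particular $\EEE$ has rank $2$, and $X^\circ$ is isomorphic over $Y^\circ$ to the dense open subset $\P_{Y^\circ}(\EEE\vert_{Y^\circ})$ of $\P_Y(\EEE)$.

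Finally I would upgrade this to a bimeromorphic map $\psi\colon X\dasharrow\P_Y(\EEE)$. Take for its graph $\Gamma_\psi$ the closure, inside the fiber product $X\times_Y\P_Y(\EEE)$, of the graph of the isomorphism $X^\circ\to\P_{Y^\circ}(\EEE\vert_{Y^\circ})$. The projection $\Gamma_\psi\to X$ is proper, because $\P_Y(\EEE)\to Y$ is, and restricts to an isomorphism over the dense open subset $X^\circ$, hence is a modification; symmetrically, the projection $\Gamma_\psi\to\P_Y(\EEE)$ is proper, because $\phi$ is, and restricts to an isomorphism over the dense open subset $\P_{Y^\circ}(\EEE\vert_{Y^\circ})$, hence is a modification. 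Therefore $\psi$ is bimeromorphic, and $X$ is bimeromorphic to the projectivization of the rank $2$ vector bundle $\EEE$ on $Y$.

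The point that I expect to require the most care is the middle step: Corollary~\ref{corollary:flat} only guarantees that $\phi_*\OOO_X(D)$ is locally free, and one has to combine this with the $\PP^1$-bundle structure over $Y^\circ$ (via the locality of push-forward and Lemma~\ref{lemma:projectivization}) to conclude both that its rank is exactly $2$ and that it restricts over $Y^\circ$ to the bundle produced by Lemma~\ref{lemma:projectivization}. The rest is a routine transcription of the argument there, together with the standard verification that a $Y$-compatible biholomorphism between dense open subsets defines a bimeromorphic map, for which the properness hypotheses are exactly what is needed.
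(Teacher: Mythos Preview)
Your approach is essentially the same as the paper's: set $\LLL=\OOO_X(D)$, push forward to get a rank~$2$ bundle $\EEE=\phi_*\LLL$ (via Corollary~\ref{corollary:flat} and the computation on a typical fiber $F\cong\PP^1$), and then compare $X$ with $\P_Y(\EEE)$ over the locus where the fibers are smooth. The only notable difference is that the paper asserts slightly more at the last step: rather than defining $\psi$ by closing up the graph of the isomorphism over $Y^\circ$, it observes that the adjunction map $\phi^*\EEE\to\LLL$ gives a \emph{holomorphic} morphism $\psi\colon X\to\P_Y(\EEE)$ on all of $X$, which is then automatically a modification; your graph-closure construction yields only a bimeromorphic map, which is all the statement needs but is marginally weaker.
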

\begin{proof}
The divisor $D$ defines a line bundle $\LLL=\OOO_X(D)$ on $X$.
Let $F$ be a typical fiber of~$\phi$.
Then $F\cong\PP^1$, and the degree of the restriction of $\LLL$ to $F$ equals $1$.
Let~\mbox{$\EEE=\phi_*\LLL$}.
Similarly to the proof of Lemma~\ref{lemma:projectivization},
we see that $\EEE$ is a vector bundle of rank~$2$.
Furthermore, there is a natural holomorphic map $\psi\colon X\to \P_Y(\EEE)$, which restricts to
an isomorphism on the dense open subset of $X$ swept out by smooth fibers of $\phi$.
One can easily see that the map $\psi$ is bimeromorphic (and actually is a modification).
\end{proof}

The following result is well-known.

\begin{lemma}\label{lemma:Aut-torus}
Let $X$ be a compact complex manifold that does not contain rational curves, and let $g\colon X\dasharrow X$ be a meromorphic map.
Then $g$ is holomorphic.
\end{lemma}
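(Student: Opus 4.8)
The plan is to argue by contradiction. Suppose $g$ is not holomorphic and choose a point $x_0\in I(g)$. First I would pass to the graph $\Gamma_g\subset X\times X$, with projections $p\colon\Gamma_g\to X$ and $q\colon\Gamma_g\to X$, and resolve the singularities of $\Gamma_g$ by Hironaka's theorem (valid for compact complex spaces) to obtain a compact complex manifold $\Gamma'$ together with holomorphic maps $p'\colon\Gamma'\to X$ and $q'\colon\Gamma'\to X$, where $p'$ is a modification restricting to a biholomorphism over $X\setminus I(g)$, and $q'$ induces $g$ over $X\setminus I(g)$.

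Next I would note that the fiber $F=(p')^{-1}(x_0)$ is positive-dimensional: otherwise $p'$ would be finite near $F$, hence --- being a proper modification onto the normal manifold $X$ --- a biholomorphism near $x_0$, so $g=q'\circ(p')^{-1}$ would be holomorphic at $x_0$. The same argument shows that the fiber of $p'$ over every point of $I(g)$ is positive-dimensional, while the fibers over $X\setminus I(g)$ are single points.

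The key input --- and the step I expect to be the main obstacle --- is the well-known fact that the positive-dimensional fibers of a modification of complex manifolds are rationally chain connected; equivalently, they are covered by rational curves. One way to see this is to dominate $p'$ by a composition of blow-ups with smooth centers (which exists over the smooth manifold $X$ by principalization of ideal sheaves), whose fibers are visibly rationally chain connected, and to note that the image of a rationally chain connected space is again rationally chain connected. Granting this, any rational curve contained in $F$ is mapped by $q'$ to a compact irreducible analytic subset of $X$ that is either a point or a rational curve; since $X$ contains no rational curves, this image is a point. As $F$ is rationally chain connected, it follows that $q'$ contracts $F$ to a point; the same holds over every point of $I(g)$, so $q'$ is constant on every fiber of $p'$.

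Finally I would use that $p'$ is a proper surjective modification onto the normal manifold $X$, so that $p'_*\OOO_{\Gamma'}=\OOO_X$; hence a holomorphic map $q'\colon\Gamma'\to X$ that is constant on the fibers of $p'$ descends to a holomorphic map $g'\colon X\to X$ with $q'=g'\circ p'$. Since $g'$ agrees with $g$ on $X\setminus I(g)$, the map $g$ is holomorphic --- a contradiction. Apart from the statement about rational curves in the fibers of a modification, every step here is formal.
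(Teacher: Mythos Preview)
Your argument is correct and follows the same route as the paper: regularize the meromorphic map, use that the exceptional fibers are covered by rational curves, and derive a contradiction. The paper is slightly more direct---it regularizes $g$ by a sequence of blow-ups in smooth centers from the outset and phrases the contradiction as ``some exceptional $\PP^1$ is not contracted by $h$''---whereas you first resolve the graph, then dominate by blow-ups, and spell out the descent via $p'_*\OOO_{\Gamma'}=\OOO_X$; these are the same argument with different levels of detail.
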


\begin{proof}
Suppose that $g$ is not holomorphic. Consider the regularization of $g$ given by a sequence of blow ups
of smooth centers. This gives a commutative diagram
$$
\xymatrix{
& Z\ar@{->}[ld]_f\ar@{->}[rd]^h &\\
X\ar@{-->}[rr]^g && X
}
$$
The exceptional locus of $f$ is covered by curves isomorphic to $\PP^1$
(this follows for instance from \cite[Theorem~7.2.8]{GPR} or
\cite[Theorem~4.1]{Fischer}).
Since $g$ is not holomorphic, some of these curves are not mapped
to points by $h$. However, there are no non-trivial maps of $\PP^1$
to $X$,
which gives a contradiction.
\end{proof}

In particular, Lemma~\ref{lemma:Aut-torus} applies to the case when $X$
is a complex torus, because the latter does not contain rational curves.

We conclude this section by an elementary observation concerning automorphisms of the projective line.

\begin{lemma}\label{lemma:P1}
Let $g\in\Aut(\PP^1)$ be an element of finite order $n\ge 2$. Then
$g$ has exactly two fixed points on $\PP^1$. Let $P_1$ and $P_2$ be these points.
Then there is a primitive $n$-th root of unity $\zeta$
such that $g$ acts in the one-dimensional
tangent spaces $T_{P_1}(F)$ and $T_{P_2}(F)$ by~$\zeta$ and~$\zeta^{-1}$, respectively.
\end{lemma}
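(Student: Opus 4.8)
The plan is to classify finite-order elements of $\Aut(\PP^1) \cong \PGL_2(\CC)$ directly and then read off the action on tangent spaces. First I would recall that any $g \in \Aut(\PP^1)$ of finite order $n \ge 2$ lifts to an element $\tilde{g} \in \GL_2(\CC)$, and since $g$ has finite order, $\tilde{g}$ is diagonalizable (a finite-order linear operator has minimal polynomial dividing $t^N - 1$ for suitable $N$, hence is diagonalizable). So after a coordinate change on $\PP^1$ we may assume $\tilde{g} = \operatorname{diag}(\alpha, \beta)$ with $\alpha \ne \beta$ (if $\alpha = \beta$ then $g$ is the identity, contradicting $n \ge 2$). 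The two fixed points of $g$ on $\PP^1$ are then $P_1 = [1:0]$ and $P_2 = [0:1]$, and these are the only fixed points since $g$ acts on the two coordinate charts with no further fixed points unless $\alpha = \beta$. This already gives the first assertion: exactly two fixed points.

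Next I would compute the tangent action. In the affine chart $z = x_1/x_0$ centered at $P_1 = [1:0]$, the map $g$ sends $z \mapsto (\beta/\alpha) z$, so $g$ acts on $T_{P_1}(\PP^1)$ by multiplication by $\lambda := \beta/\alpha$. In the affine chart $w = x_0/x_1$ centered at $P_2 = [0:1]$, the map $g$ sends $w \mapsto (\alpha/\beta) w$, so $g$ acts on $T_{P_2}(\PP^1)$ by multiplication by $\lambda^{-1}$. It remains to check that $\lambda$ is a primitive $n$-th root of unity. The image of $g$ in $\PGL_2(\CC)$ has order $n$, which means $\tilde{g}^k$ is scalar precisely when $n \mid k$; but $\tilde{g}^k = \operatorname{diag}(\alpha^k, \beta^k)$ is scalar iff $\alpha^k = \beta^k$ iff $\lambda^k = 1$. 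Hence the order of $\lambda$ in $\CC^\times$ is exactly $n$, i.e.\ $\lambda$ is a primitive $n$-th root of unity. Setting $\zeta = \lambda$ gives the claim, with $g$ acting by $\zeta$ on $T_{P_1}$ and by $\zeta^{-1}$ on $T_{P_2}$. (The statement in the lemma writes $T_{P_i}(F)$ with $F$ in place of $\PP^1$; this is presumably a typo for the anticipated application where $F \cong \PP^1$ is a fiber, and the same computation applies verbatim after identifying $F$ with $\PP^1$.)

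This is entirely elementary, so there is no real obstacle; the only point requiring a moment's care is the equivalence between "the order of $g$ in $\PGL_2$ is $n$" and "$\lambda$ is a primitive $n$-th root of unity," which is the diagonalization argument above. One could alternatively argue that $g$ of finite order generates a finite (hence, by averaging a Hermitian form, unitarizable) cyclic subgroup of $\PGL_2(\CC)$, conjugate into $\operatorname{PSU}_2$, and then a finite-order rotation of the sphere has two antipodal fixed points with reciprocal tangent eigenvalues; but the linear-algebra approach is shorter and self-contained, so that is the route I would write up.
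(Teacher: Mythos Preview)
Your argument is correct and is essentially the same as the paper's: both diagonalize a lift of $g$ to write $g$ in homogeneous coordinates as $(x:y)\mapsto(\zeta x:y)$ with $\zeta$ a primitive $n$-th root of unity, from which the fixed points and tangent eigenvalues are read off directly. The paper states this in one line without justifying the diagonalizability or the primitivity of $\zeta$, whereas you spell out those details; otherwise the approaches coincide.
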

\begin{proof}
In appropriate homogeneous coordinates $x$ and $y$ on $\PP^1$, one can write the action of $g$ as
$$
(x:y)\mapsto (\zeta x: y),
$$
where $\zeta$ is a primitive root of $1$.
\end{proof}

\section{Proof of Theorem~\ref{theorem:conic-bundle}}
\label{section:conic-bundles}

In this section we study fixed points of fiberwise bimeromorphic maps and prove Theorem~\ref{theorem:conic-bundle}.

\begin{lemma}\label{lemma:two-sections}
Let $\phi\colon X\to Y$ be a
surjective holomorphic map of compact complex manifolds whose
typical fiber is isomorphic to $\PP^1$.
Let $g\in \Bim(X)_{\phi}$ be a bimeromorphic map.
Suppose that the order of $g$ is finite and is larger than $2$.
Then there exist two distinct irreducible effective divisors
on $X$ whose intersection with a typical fiber of $\phi$ equals $1$.
Moreover, if~$g$ is biholomorphic and $\phi$ is a $\PP^1$-bundle, then these divisors may be chosen to
be disjoint sections of~$\phi$.
\end{lemma}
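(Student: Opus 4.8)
The plan is to push everything down to the behaviour of $g$ on a single typical fiber, where Lemma~\ref{lemma:P1} describes $g$ completely, and then to spread the two fixed points of $g$ out into divisors on $X$. Throughout, let $n\ge 3$ be the order of $g$. First I would use Lemma~\ref{lemma:faithful} to pass to a convenient situation: after discarding a proper closed analytic subset of $Y$ and restricting to $Y^\circ\subset Y$ and $X^\circ=\phi^{-1}(Y^\circ)$, I may assume that $\phi$ is a $\PP^1$-bundle, that $g$ acts on $X^\circ$ biholomorphically, and that $g$ restricts to each fiber of $\phi$ over $Y^\circ$ as an automorphism of $\PP^1$ of order exactly $n$. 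Note $Y^\circ$ is connected, being the complement of a proper closed analytic subset in the irreducible manifold $Y$. The divisors on the original $X$ will be recovered by taking closures at the very end.

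Next I would analyse $\Sigma:=\Fix(g|_{X^\circ})$. For $y\in Y^\circ$, Lemma~\ref{lemma:P1} says that $g$ fixes exactly two points of $F_y\cong\PP^1$ and acts in their tangent lines by $\zeta_y$ and $\zeta_y^{-1}$ for a primitive $n$-th root of unity $\zeta_y$; since the unordered pair $\{\zeta_y,\zeta_y^{-1}\}$ depends continuously on $y$ and takes finitely many values, it is a fixed pair $\{\zeta,\zeta^{-1}\}$, and $\zeta\ne\zeta^{-1}$ because $n>2$. Being the fixed locus of a finite order biholomorphism, $\Sigma$ is a closed smooth submanifold of $X^\circ$, and $\phi$ restricts to a proper map $\Sigma\to Y^\circ$ with every fiber equal to two points; since $\zeta\ne1$, the differential of $g$ has no eigenvalue $1$ in the fiber direction along $\Sigma$, so $\Sigma$ is transverse to the fibers and $\Sigma\to Y^\circ$ is an unramified double cover. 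The eigenvalue of $g$ in the fiber tangent line is a locally constant function $\Sigma\to\{\zeta,\zeta^{-1}\}$ that is surjective over every point of $Y^\circ$, so $\Sigma$ splits as $\Sigma_1\sqcup\Sigma_2$ according to the two eigenvalues, and each $\Sigma_i\to Y^\circ$ is a proper local biholomorphism of degree one, hence an isomorphism. Thus $\Sigma_1$ and $\Sigma_2$ are two disjoint sections of $\phi$ over $Y^\circ$.

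To obtain divisors on $X$, I would invoke Lemma~\ref{lemma:fixed-points} for a closed analytic subset $\overline{\Fix}(g)\subset X$ in which $\Fix(g)$ is dense and open; since $X^\circ$ avoids the indeterminacy locus $I(g)$, it agrees with $\Fix(g)=\Sigma_1\sqcup\Sigma_2$ on $X^\circ$. Because $\Sigma_1,\Sigma_2$ are smooth, connected and of dimension $\dim X-1$, an elementary argument with irreducible components shows that $\overline{\Fix}(g)$ has exactly two irreducible components $D_1,D_2$ meeting $X^\circ$, with $D_i\cap X^\circ=\Sigma_i$; these are distinct prime divisors on $X$, and $D_i$ meets a typical fiber $F\cong\PP^1$ in the single reduced point $\Sigma_i\cap F$, so $D_i\cdot F=1$, which proves the first assertion. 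For the last assertion, assume $\phi$ is a $\PP^1$-bundle over all of $Y$ and $g\in\Aut(X)_\phi$. The crux is to show $g$ acts with order exactly $n$ on \emph{every} fiber: granting this, Lemma~\ref{lemma:P1} applies over all of $Y$, $\Fix(g)$ is a proper unramified double cover of $Y$, the eigenvalue splitting above runs over all of $Y$ and writes $\Fix(g)=D_1\sqcup D_2$ with each $D_i\to Y$ a degree one covering of the connected manifold $Y$, hence an isomorphism, so $D_1$ and $D_2$ are disjoint sections. To prove the crux, suppose $g$ acts on some fiber $F_{y_0}$ with order $m<n$. Over a connected neighbourhood $B\ni y_0$ choose a trivialization $\phi^{-1}(B)\cong\PP^1\times B$; since $g$ is fiberwise, it is given by $(p,b)\mapsto(g_b(p),b)$ with $g_b\in\Aut(\PP^1)$ holomorphic in $b$ and $g_b^n=\mathrm{id}$. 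The holomorphic map $b\mapsto g_b^m$ is valued in the set of finite order elements of $\PGL_2(\CC)$ and equals $\mathrm{id}$ at $y_0$; since that set has $\{\mathrm{id}\}$ as an isolated point (a finite order element of $\PGL_2(\CC)$ close to $\mathrm{id}$ is diagonalizable with eigenvalue ratio a root of unity close to $1$, hence equals $\mathrm{id}$), the map is identically $\mathrm{id}$, so $g_b^m=\mathrm{id}$ for all $b\in B$; taking $b\in B$ over which $g$ has order $n$ on the fiber gives $n\mid m$, a contradiction.

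The main obstacle I anticipate is that very last point: the rigidity statement that an automorphism of $\PP^1$ of finite order near the identity is the identity, and hence that $g$ cannot have smaller order on a special fiber of a $\PP^1$-bundle. This is the only place where one genuinely uses that $\phi$ is a $\PP^1$-bundle rather than a bare conic bundle, and it is what upgrades the two disjoint sections over $Y^\circ$ into disjoint sections over all of $Y$. A secondary, more technical nuisance is the passage from the nice locus $Y^\circ$ back to $X$: one must check that the closures of $\Sigma_1,\Sigma_2$ really are irreducible analytic divisors, which is precisely why it is convenient to work inside the analytic set $\overline{\Fix}(g)$ supplied by Lemma~\ref{lemma:fixed-points} rather than with those closures directly.
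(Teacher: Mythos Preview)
Your proof is correct, and for the first assertion it is the paper's argument reorganized bottom--up: the paper starts from $\overline{\Fix}(g)$ on all of $X$, takes the codimension-one components dominating $Y$, and then splits them by the tangential eigenvalue $\zeta$ versus $\zeta^{-1}$ at a typical fiber, whereas you first build the split $\Sigma_1\sqcup\Sigma_2$ over a good open $Y^\circ$ and then recover the divisors as the two components of $\overline{\Fix}(g)$ meeting $X^\circ$. Both routes hinge on the same idea, that $n>2$ forces $\zeta\neq\zeta^{-1}$ and hence separates the two sheets.

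Where you genuinely diverge is in the $\PP^1$-bundle case, precisely the step you flag as the crux. The paper argues pointwise: replacing $g$ by a power, assume $g$ is nontrivial but fixes some fiber $F$ pointwise; at $P\in F$ the action on $T_P(X)$ must be nontrivial, yet the $g$-equivariant sequence $0\to T_P(F)\to T_P(X)\to T_{\phi(P)}(Y)\to 0$ has trivial $g$-action on both ends, so (by semisimplicity of a finite-order action) the middle action is trivial too, a contradiction. Your rigidity argument via a local trivialization is a valid alternative, trading this tangent-space linear algebra for the concrete fact that the identity is isolated in $\{x\in\PGL_2(\CC):x^n=\mathrm{id}\}$. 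One small correction: your parenthetical says ``a finite order element of $\PGL_2(\CC)$ close to $\mathrm{id}$ \ldots\ hence equals $\mathrm{id}$'', which is false as written since roots of unity accumulate at~$1$; what makes it work is that each $g_b^m$ satisfies $(g_b^m)^n=\mathrm{id}$, so the eigenvalue ratio lies in the finite set of $n$-th roots of unity, and \emph{those} are discrete. With that adjustment your argument goes through.
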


\begin{proof}
By Lemma~\ref{lemma:fixed-points}
there exists a closed analytic subset
$\overline{\Fix}(g)$ of $X$ such that~\mbox{$\Fix(g)$} is a dense open subset
of $\overline{\Fix}(g)$.
Since $g$ is not the identity map, one has $\overline{\Fix}(g)\neq X$.
Let $\Sigma$ be the union of irreducible components
of $\overline{\Fix}(g)$ that have codimension $1$ in $X$ and are mapped surjectively on $Y$ by $\phi$.
Then $\Sigma$ is a (possibly zero) effective divisor on $X$.

Let $n>2$ be the order of $g$.
Let $F$ be a typical fiber of $\phi$, so that $F\cong\PP^1$.
By Lemma~\ref{lemma:faithful} we may assume that the action of $g$ on $F$ is holomorphic,
and the restriction of $g$ to $F$ has order $n$.
Therefore, $g$ has exactly two fixed points on $F$, say, $P_1$ and $P_2$.
This means that $\Sigma$ intersects a typical fiber of $\phi$ by two points.

By Lemma~\ref{lemma:P1} there is a primitive $n$-th root of unity~$\zeta$
such that $g$ acts in the one-dimensional
tangent spaces $T_{P_1}(F)$ and $T_{P_2}(F)$ by~$\zeta$ and~$\zeta^{-1}$, respectively.
Since~\mbox{$n>2$}, we see that $\zeta\neq\zeta^{-1}$. Hence $\Sigma$
splits as a union~\mbox{$\Sigma_\zeta\cup\Sigma_{\zeta^{-1}}$} of two divisors whose intersection number
with a typical fiber of $\phi$ equals $1$.

Now suppose that $g$ is biholomorphic and $\phi$ is a $\PP^1$-bundle. Let us show that the divisors $\Sigma_\zeta$ and $\Sigma_{\zeta^{-1}}$
are disjoint sections of $\phi$. By Lemma~\ref{lemma:P1} to do this it is enough to
check that $g$ acts by an automorphism of order $n$ on every fiber of $\phi$.

Suppose that this is not the case. Then, replacing $g$ by its suitable power if necessary,
we may assume that there is a fiber $F$ of $\phi$ such that the non-trivial
automorphism $g$ restricts to the identity map on $F$. Let $P$ be a point on $F$.
Then $g$ acts non-trivially on the tangent space $T_P(X)$,
see for instance \cite[\S2.2]{Akhiezer} or \cite[Corollary~4.2]{ProkhorovShramov-compact}.
On the other hand, $g$ acts trivially on the subspace $T_P(F)\subset T_P(X)$.
The morphism $\phi$ is a submersion by \cite[Theorem~2.1.14]{GPR}.
Therefore,
$$
d\phi\colon T_P(X)\longrightarrow T_{\phi(P)}(Y)
$$
is a surjective linear map whose kernel is identified
with $T_P(F)$, see for instance~\mbox{\cite[Remark~2.1.15]{GPR}}.
Moreover, the map~$d\phi$ is $g$-equivariant, where the corresponding
action of $g$ on $Y$ is taken to be trivial.
Since $g$ acts trivially on the tangent space $T_{\phi(P)}(Y)$, we conclude that
the action of $g$ on $T_P(X)$ is trivial as well.
The obtained contradiction completes the proof of the lemma.
\end{proof}

Now we prove Theorem~\ref{theorem:conic-bundle}.

\begin{proof}[Proof of Theorem~\ref{theorem:conic-bundle}]
Let $g$ be a non-trivial element of $G$.
By Lemma~\ref{lemma:two-sections}, the order of $g$ equals $2$.
Using Lemma~\ref{lemma:faithful}, we may assume that the action of $G$ on $F\cong\PP^1$ is holomorphic and faithful.
Since all non-trivial elements of $G$ have order $2$, we conclude that $G$ is a subgroup of
$\ZZ/2\ZZ\times\ZZ/2\ZZ$.
\end{proof}

\begin{remark}
It follows from the proof of Theorem~\ref{theorem:conic-bundle} that its assertion actually holds
under a slightly weaker assumption: it is enough to require that
there do not exist two distinct irreducible effective divisors on $X$ whose intersection numbers
with a typical fiber of $\phi$ equal $1$.
\end{remark}

\begin{remark}
If $\KK$ is a field of characteristic zero that contains all roots of $1$, then the proof
of  Lemma~\ref{lemma:P1} works for automorphisms of $\PP^1_{\KK}$. Thus one can prove
Corollary~\ref{corollary:conic-bundle} using the same argument as in the proof of
Theorem~\ref{theorem:conic-bundle}.
\end{remark}

One more consequence of Lemma~\ref{lemma:two-sections} is the following
analog of Theorem~\ref{theorem:conic-bundle}.

\begin{lemma}\label{lemma:non-decomposable-proj}
Let $X$ and $Y$ be compact
complex manifolds, and let $\phi\colon X\to Y$ be a $\PP^1$-bundle.
Suppose that $X$ is not a projectivization
of a decomposable vector bundle of rank~$2$ on $Y$.
Let $G$ be a finite subgroup of $\Aut(X)_\phi$.
Then every non-trivial element of $G$ has order~$2$. Moreover, $G$ is either a trivial
subgroup, or is isomorphic to~\mbox{$\ZZ/2\ZZ$}, or is isomorphic to~\mbox{$\ZZ/2\ZZ\times\ZZ/2\ZZ$}.
\end{lemma}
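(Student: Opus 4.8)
The plan is to mimic the proof of Theorem~\ref{theorem:conic-bundle}: use Lemma~\ref{lemma:two-sections} together with Lemma~\ref{lemma:projectivization} to force every non-trivial element of $G$ to have order $2$, and then use Lemma~\ref{lemma:faithful} together with the classification of finite subgroups of $\Aut(\PP^1)$ to pin down the isomorphism type of $G$.

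First I would pick a non-trivial element $g\in G$ and suppose, aiming for a contradiction, that its order $n$ exceeds~$2$. Since $G$ is finite, $g$ is an element of finite order $n>2$ in $\Bim(X)_\phi$ which moreover is biholomorphic, and $\phi$ is a $\PP^1$-bundle; hence the ``moreover'' part of Lemma~\ref{lemma:two-sections} applies and produces two disjoint sections $D_1$ and $D_2$ of $\phi$. Because $\phi$ is a $\PP^1$-bundle, it is locally trivial, so each $D_i$ is a smooth divisor on $X$ whose intersection number with a fiber of $\phi$ equals~$1$. Now Lemma~\ref{lemma:projectivization} applies to the pair $D_1$, $D_2$: it shows that $X$ is a projectivization of a rank $2$ vector bundle on $Y$ and that, since $D_1$ and $D_2$ are disjoint sections, this vector bundle is decomposable. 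This contradicts the hypothesis of the lemma. Therefore every non-trivial element of $G$ has order~$2$; in particular $G$ is an elementary abelian $2$-group.

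It then remains to identify $G$. By Lemma~\ref{lemma:faithful}, the group $G$ acts by holomorphic maps in a neighborhood of a typical fiber $F\cong\PP^1$ of $\phi$, and this action is faithful on $F$. Thus $G$ embeds into $\Aut(\PP^1)\cong\PGL_2(\CC)$. A finite subgroup of $\PGL_2(\CC)$ all of whose non-trivial elements have order~$2$ is abelian, and the only finite abelian subgroups of $\PGL_2(\CC)$ are cyclic or isomorphic to $\ZZ/2\ZZ\times\ZZ/2\ZZ$; hence $G$ is trivial, or isomorphic to $\ZZ/2\ZZ$, or isomorphic to $\ZZ/2\ZZ\times\ZZ/2\ZZ$, which is the desired conclusion.

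I do not expect a genuine obstacle here, since all the geometric content has already been isolated in the preliminary lemmas and the proof is essentially an assembly of Lemmas~\ref{lemma:two-sections}, \ref{lemma:projectivization}, and~\ref{lemma:faithful}. The only point requiring a small verification is that a section of a $\PP^1$-bundle is a bona fide divisor meeting a fiber with multiplicity~$1$, so that Lemma~\ref{lemma:projectivization} is literally applicable to the two disjoint sections supplied by Lemma~\ref{lemma:two-sections}; this is immediate from local triviality.
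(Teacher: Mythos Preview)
Your proof is correct and follows essentially the same route as the paper: use Lemma~\ref{lemma:projectivization} together with the ``moreover'' part of Lemma~\ref{lemma:two-sections} to force every non-trivial element to have order~$2$, and then invoke Lemma~\ref{lemma:faithful} to embed $G$ into $\Aut(\PP^1)$ and read off the isomorphism type. The paper's write-up is simply more terse, phrasing the first step as a direct implication rather than a contradiction, but the logical content is identical.
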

\begin{proof}
Let $g$ be a non-trivial element of $g$.
We know from Lemma~\ref{lemma:projectivization} that there do not exist two disjoint sections
of $\phi$. Hence the order of $g$ equals $2$ by Lemma~\ref{lemma:two-sections}.
Using Lemma~\ref{lemma:faithful} as in the proof of Theorem~\ref{theorem:conic-bundle},
we conclude that $G$ is a subgroup of~\mbox{$\ZZ/2\ZZ\times\ZZ/2\ZZ$}.
\end{proof}

\section{Jordan property}
\label{section:Jordan}

In this section we apply the previous results to study groups
of bimeromorphic selfmaps.

\begin{definition}[{see \cite[Definition~2.1]{Popov-Jordan}, \cite[Definition~1.1]{BandmanZarhin2017}}]
\label{definition:Jordan}
A group~$\Gamma$ is called \emph{Jordan}
if there is a constant~$J$ such that
for any finite subgroup $G\subset\Gamma$ there exists
a normal abelian subgroup~\mbox{$A\subset G$} of index at most~$J$.
We say that $\Gamma$ is \emph{strongly Jordan} if it is Jordan and
there exists a constant $R=R(\Gamma)$ such that every finite subgroup
of $\Gamma$ is generated by at most $R$ elements.
\end{definition}

\begin{example}
\label{example:torus}
Let $Y$ be a complex torus. Then the group $\Bim(Y)$ is strongly Jordan.
Indeed, we have $\Bim(Y)=\Aut(Y)$ by Lemma~\ref{lemma:Aut-torus}.
On the other hand, it is easy to see that the group~\mbox{$\Aut(Y)$} is Jordan, see for instance \cite[Corollary~8.7]{ProkhorovShramov-compact}.
By~\mbox{\cite[Theorem~1.3]{MiR}} this implies that
$\Aut(Y)$ is strongly Jordan.
\end{example}

Given a group $\Gamma$, we say that $\Gamma$ has
\emph{bounded finite subgroups} if there exists
a constant~\mbox{$B=B(\Gamma)$} such that,
for any finite subgroup	$G\subset\Gamma$, one has $|G|\le B$.
For the following group-theoretic result we refer the reader to \cite[Lemma~2.8]{ProkhorovShramov2014}
or~\mbox{\cite[Lemma~2.2]{BandmanZarhin2015}}.

\begin{lemma}
\label{lemma:group-theory}
Let
$$
1\longrightarrow\Gamma'\longrightarrow\Gamma\longrightarrow\Gamma''
$$
be an exact sequence of groups. Suppose that $\Gamma'$ has bounded finite subgroups and $\Gamma''$ is strongly Jordan.
Then $\Gamma$ is strongly Jordan.
\end{lemma}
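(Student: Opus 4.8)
The plan is to recover a normal abelian subgroup of bounded index in an arbitrary finite subgroup $G \subset \Gamma$ by using the bound on $\Gamma'$ on the "kernel part" of $G$ and the (strong) Jordan property on the "quotient part". First I would fix the constants: let $B = B(\Gamma')$ bound the orders of finite subgroups of $\Gamma'$, let $J = J(\Gamma'')$ be the Jordan constant for $\Gamma''$, and let $R = R(\Gamma'')$ be the bound on the number of generators of finite subgroups of $\Gamma''$. Given a finite $G \subset \Gamma$, set $G' = G \cap \Gamma'$, which is a finite subgroup of $\Gamma'$, so $|G'| \le B$, and let $\bar{G}$ be the image of $G$ in $\Gamma''$, which is a finite subgroup, hence has a normal abelian subgroup $\bar{A}$ with $[\bar{G} : \bar{A}] \le J$ and is generated by at most $R$ elements.

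Next I would pull $\bar{A}$ back: let $A_1 \subset G$ be the preimage of $\bar{A}$ under $G \to \bar{G}$, so that $A_1$ is normal in $G$ with $[G : A_1] \le J$, and $A_1$ sits in an extension $1 \to G' \to A_1 \to \bar{A} \to 1$. The group $A_1$ need not be abelian, but it is "almost abelian" in a controlled way, and the key point is to extract from it a characteristic (hence normal in $G$) abelian subgroup whose index is bounded purely in terms of $B$, $J$, $R$. For this I would invoke a standard fact about such extensions: since $\bar{A}$ is abelian and generated by at most $R$ elements and $G'$ has order at most $B$, the group $A_1$ has a normal abelian subgroup of index bounded by a function of $B$ and $R$ only. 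One clean way to see this is to consider the centralizer $Z = C_{A_1}(G')$; since $\mathrm{Aut}(G')$ has order at most $B!$, the subgroup $Z$ has index at most $B!$ in $A_1$, and $Z \cap G'$ is central in $Z$ with $Z/(Z \cap G')$ abelian of bounded rank, so $Z$ is nilpotent of class at most $2$; then the commutator map makes $Z$ have a central, hence normal, abelian subgroup of index bounded in terms of $B$ and $R$ (the commutator pairing $Z/(Z\cap G') \times Z/(Z \cap G') \to Z\cap G'$ is bilinear with values in a group of order $\le B$, and its kernel, intersected suitably, gives the abelian subgroup). Passing from "normal in $A_1$" to "normal in $G$" is handled by replacing the subgroup just found with the intersection of its $G$-conjugates: there are at most $[G : A_1] \cdot [A_1 : (\text{subgroup})] \le J \cdot (\text{bounded})$ conjugates, and the intersection of $k$ subgroups each of index $m$ has index at most $m^k$, so the resulting subgroup $A \subset G$ is normal, abelian, and of index bounded solely in terms of $B$, $J$, $R$. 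This establishes that $\Gamma$ is Jordan with constant depending only on $B(\Gamma')$, $J(\Gamma'')$, $R(\Gamma'')$.

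Finally, for the "strongly" part I would bound the number of generators of $G$: since $G' \subset G$ has order $\le B$ it is generated by at most $\log_2 B$ elements, and $\bar{G} = G/G'$ is generated by at most $R$ elements, so $G$ is generated by at most $\log_2 B + R$ elements. Combining this with the Jordan bound just obtained shows $\Gamma$ is strongly Jordan, as claimed. The main obstacle, and the only nontrivial step, is the extraction of a bounded-index normal abelian subgroup of the extension $A_1$ of a bounded-rank abelian group by a bounded-order group; everything else is bookkeeping with indices and conjugation. In the write-up I would simply cite the referenced sources \cite[Lemma~2.8]{ProkhorovShramov2014} and \cite[Lemma~2.2]{BandmanZarhin2015} for this, since the statement is exactly Lemma~\ref{lemma:group-theory} itself, but the sketch above indicates the mechanism in case a self-contained argument is preferred.
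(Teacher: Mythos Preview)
The paper does not actually prove Lemma~\ref{lemma:group-theory}: it simply records the statement and refers the reader to \cite[Lemma~2.8]{ProkhorovShramov2014} and \cite[Lemma~2.2]{BandmanZarhin2015}. Your proposal therefore goes well beyond what the paper does, supplying a self-contained argument where the paper is content with a citation.

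Your sketch is correct and is essentially the standard argument behind those references. The only spot where a reader might pause is the claim that $Z/(Z\cap G')$, being a subgroup of the finite abelian group $\bar A$ generated by at most $R$ elements, is itself generated by at most $R$ elements; this is true (for a finite abelian group the minimal number of generators is $\max_p \dim_{\mathbb{F}_p} A[p]$, which cannot increase on passing to a subgroup), but it is worth saying explicitly since it is exactly where the \emph{strong} Jordan hypothesis on $\Gamma''$ is used in the Jordan part of the conclusion. Once that is granted, your centralizer-plus-commutator step produces an abelian subgroup of $A_1$ of index bounded in terms of $B$ and $R$, and the conjugate-intersection trick promotes it to a normal abelian subgroup of $G$ of index bounded in terms of $B$, $J$, $R$. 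The generator bound $\log_2 B + R$ for $G$ is immediate. Since the paper itself only cites the lemma, either your sketch or the bare citation you mention at the end would be an acceptable write-up; the sketch has the advantage of making visible why the bounded-rank hypothesis on finite subgroups of $\Gamma''$ is needed.
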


Now we will derive some corollaries from Theorem~\ref{theorem:conic-bundle}.

\begin{corollary}\label{corollary:strongly-Jordan}
Let $X$ and $Y$ be compact complex manifolds, and let $\phi\colon X\to Y$ be a
surjective holomorphic map whose typical fiber is isomorphic to~$\PP^1$.
Suppose that there does not exist an effective divisor on $X$ whose intersection number
with a typical fiber of $\phi$ equals~$1$.
Suppose also that the group $\Bim(Y)$ is strongly Jordan.
Then the group~\mbox{$\Bim(X; \phi)$} is strongly Jordan.
\end{corollary}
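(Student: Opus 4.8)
The plan is to produce an exact sequence
$$
1\longrightarrow\Bim(X)_\phi\longrightarrow\Bim(X;\phi)\longrightarrow\Bim(Y)
$$
and then invoke Lemma~\ref{lemma:group-theory}. The map on the right sends $g\in\Bim(X;\phi)$ to the induced bimeromorphic selfmap $g_Y$ of $Y$; this is well defined because $g_Y$ is uniquely determined by the relation $\phi\circ g=g_Y\circ\phi$ (the map $\phi$ is surjective, so $g_Y$ is forced on a dense open set and hence as a meromorphic map), and it is clearly a homomorphism. Its kernel is exactly the subgroup of selfmaps that act fiberwise, that is, $\Bim(X)_\phi$. So the sequence is exact, and $\Bim(Y)$ is strongly Jordan by hypothesis.

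It therefore remains to show that $\Bim(X)_\phi$ has bounded finite subgroups. Here is the point where Theorem~\ref{theorem:conic-bundle} enters. First I would reduce to the case where $Y$ is a manifold and $\phi$ has the shape required by that theorem. There is a subtlety: the typical fiber of $\phi$ is $\PP^1$, but not every fiber need be one-dimensional a priori. However, for bounding \emph{finite} subgroups we are allowed to shrink: if $G\subset\Bim(X)_\phi$ is finite, replace $Y$ by the complement of the proper closed analytic subset over which fibers jump dimension, and replace $X$ by the preimage, so that all fibers of the restricted map are one-dimensional (using \cite[Theorem~2.1.19]{GPR}); after a further equivariant modification of this restricted total space we may assume all fibers are smooth, i.e.\ that $\phi$ restricts to a $\PP^1$-bundle on a $G$-invariant dense open set. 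The hypothesis that there is no effective divisor on $X$ meeting a typical fiber in degree $1$ is preserved under these operations, since a section of the $\PP^1$-bundle (a divisor of fiber-degree $1$) would have a closure on the original $X$ that is again a divisor of fiber-degree $1$. Now Theorem~\ref{theorem:conic-bundle} applies verbatim to this $G$-action and gives that every nontrivial element of $G$ has order $2$ and $G$ embeds in $\ZZ/2\ZZ\times\ZZ/2\ZZ$; in particular $|G|\le 4$, so $\Bim(X)_\phi$ has bounded finite subgroups.

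With both hypotheses of Lemma~\ref{lemma:group-theory} verified, we conclude that $\Bim(X;\phi)$ is strongly Jordan, as claimed. The main obstacle, and the step deserving care, is the reduction in the previous paragraph: one must check that shrinking $Y$ and passing to an equivariant smooth model do not destroy the "no divisor of fiber-degree $1$" hypothesis, and that the finite group $G$ acting bimeromorphically on $X$ still acts fiberwise on the new model in the precise sense demanded by Theorem~\ref{theorem:conic-bundle}. Both are routine once one notes that closures of divisors push forward to divisors of the same fiber-degree and that $\Bim$ is a bimeromorphic invariant, but they are the crux of the argument; everything else is the formal exact-sequence bookkeeping.
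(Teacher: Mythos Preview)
Your approach is exactly the paper's: form the exact sequence
\[
1\longrightarrow\Bim(X)_\phi\longrightarrow\Bim(X;\phi)\longrightarrow\Bim(Y),
\]
bound finite subgroups of the kernel via Theorem~\ref{theorem:conic-bundle}, and conclude by Lemma~\ref{lemma:group-theory}.

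The only comment is that your middle paragraph of reductions is unnecessary. Reread the hypotheses of Theorem~\ref{theorem:conic-bundle}: it asks only that $\phi$ be a proper surjective holomorphic map of complex manifolds whose \emph{typical} fiber is $\PP^1$, that no divisor on $X$ have fiber-degree $1$, and that $G$ act by fiberwise bimeromorphic maps. Since $X$ and $Y$ are compact, $\phi$ is automatically proper, and the remaining hypotheses of the corollary match those of the theorem verbatim (the theorem does not require all fibers to be one-dimensional, nor does it require $\phi$ to be a $\PP^1$-bundle, nor does it require $G$ to act biholomorphically). So Theorem~\ref{theorem:conic-bundle} applies directly to any finite $G\subset\Bim(X)_\phi$ and gives $|G|\le 4$; no shrinking of $Y$ or equivariant resolution is needed. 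The paper's proof is literally three lines for this reason.
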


\begin{proof}
One has an exact sequence of groups
\begin{equation*}
1\to\Bim(X)_\phi\to \Bim(X;\phi)\to\Bim(Y)
\end{equation*}
We know from Theorem~\ref{theorem:conic-bundle} that the group
$\Bim(X)_\phi$ has bounded finite subgroups.
Therefore, the group $\Bim(X; \phi)$ is strongly Jordan by Lemma~\ref{lemma:group-theory}.
\end{proof}

We can also prove an analog of Corollary~\ref{corollary:strongly-Jordan} for
automorphism groups.

\begin{corollary}\label{corollary:strongly-Jordan-Aut}
Let $X$ and $Y$ be compact complex manifolds, and let $\phi\colon X\to Y$ be a
$\PP^1$-bundle.
Suppose that $X$ is not a projectivization of a decomposable vector
bundle of rank $2$ on~$Y$.
Suppose also that the group $\Aut(Y)$ is strongly Jordan.
Then the group~\mbox{$\Aut(X; \phi)$} is strongly Jordan.
\end{corollary}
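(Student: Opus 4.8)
The plan is to mirror the proof of Corollary~\ref{corollary:strongly-Jordan} exactly, replacing bimeromorphic groups by automorphism groups and replacing the input from Theorem~\ref{theorem:conic-bundle} by the input from Lemma~\ref{lemma:non-decomposable-proj}. First I would record the natural exact sequence
\begin{equation*}
1\to\Aut(X)_\phi\to\Aut(X;\phi)\to\Aut(Y),
\end{equation*}
which comes directly from the definitions in \S\ref{section:notation}: an element of $\Aut(X;\phi)$ is by definition $\phi$-equivariant, hence induces a bimeromorphic selfmap $g_Y$ of $Y$; since $\phi$ is proper with one-dimensional fibers and $g$ is biholomorphic, $g_Y$ is in fact a well-defined holomorphic (indeed biholomorphic) map, so the last arrow genuinely lands in $\Aut(Y)$, and its kernel is exactly $\Aut(X)_\phi$.

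Next I would invoke Lemma~\ref{lemma:non-decomposable-proj}. Its hypotheses are met: $\phi$ is a $\PP^1$-bundle of compact complex manifolds, and $X$ is not a projectivization of a decomposable rank $2$ bundle on $Y$ by assumption. Hence every finite subgroup of $\Aut(X)_\phi$ is isomorphic to a subgroup of $\ZZ/2\ZZ\times\ZZ/2\ZZ$, so in particular $\Aut(X)_\phi$ has bounded finite subgroups (with bound $4$). Since $\Aut(Y)$ is strongly Jordan by hypothesis, Lemma~\ref{lemma:group-theory} applied to the displayed exact sequence yields that $\Aut(X;\phi)$ is strongly Jordan, which is the claim.

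The only step requiring any care — and what I expect to be the main (though minor) obstacle — is the justification that the exact sequence is valid, i.e.\ that $\phi$-equivariance of a biholomorphic $g$ produces a biholomorphic descent $g_Y$ with kernel precisely $\Aut(X)_\phi$. For a $\PP^1$-bundle this is straightforward: $g$ permutes the fibers of $\phi$, hence induces a bijection of $Y$; local triviality of $\phi$ together with the fact that $g$ is biholomorphic makes this bijection biholomorphic; and it is the identity on $Y$ precisely when $g$ preserves every fiber, i.e.\ when $g\in\Aut(X)_\phi$. Everything else is a direct citation of earlier results, so the proof is short.

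\begin{proof}
One has an exact sequence of groups
\begin{equation*}
1\to\Aut(X)_\phi\to\Aut(X;\phi)\to\Aut(Y),
\end{equation*}
where the last homomorphism sends $g$ to the induced automorphism $g_Y$ of $Y$; this is well defined and lands in $\Aut(Y)$ because $\phi$ is a $\PP^1$-bundle and $g$ is biholomorphic, and its kernel is $\Aut(X)_\phi$ by definition.
We know from Lemma~\ref{lemma:non-decomposable-proj} that every finite subgroup of $\Aut(X)_\phi$ is isomorphic to a subgroup of $\ZZ/2\ZZ\times\ZZ/2\ZZ$, so that $\Aut(X)_\phi$ has bounded finite subgroups.
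Since $\Aut(Y)$ is strongly Jordan by assumption, the group $\Aut(X;\phi)$ is strongly Jordan by Lemma~\ref{lemma:group-theory}.
\end{proof}
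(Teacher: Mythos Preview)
Your proof is correct and follows exactly the same route as the paper: write down the exact sequence $1\to\Aut(X)_\phi\to\Aut(X;\phi)\to\Aut(Y)$, use Lemma~\ref{lemma:non-decomposable-proj} to bound finite subgroups of the kernel, and conclude via Lemma~\ref{lemma:group-theory}. The only difference is that you spell out the justification of the exact sequence a bit more carefully, which the paper leaves implicit.
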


\begin{proof}
One has an exact sequence of groups
\begin{equation*}
1\to\Aut(X)_\phi\to \Aut(X;\phi)\to\Aut(Y)
\end{equation*}
By Lemma~\ref{lemma:non-decomposable-proj} the group
$\Aut(X)_\phi$ has bounded finite subgroups.
Therefore, the group~\mbox{$\Aut(X; \phi)$} is strongly Jordan by Lemma~\ref{lemma:group-theory}.
\end{proof}

One interesting application of Corollary~\ref{corollary:strongly-Jordan-Aut} concerns
$\PP^1$-bundles over complex tori.

\begin{remark}\label{remark:equivariant}
Let $Y$ be a complex torus, let $X$ be a compact complex manifold,
and let~\mbox{$\phi\colon X\to Y$} be a holomorphic map whose typical fiber is isomorphic to~$\PP^1$.
Since there are no non-trivial maps of $\PP^1$ to the complex torus $Y$, we see that the image of
a typical fiber of~$\phi$ under any bimeromorphic map of $X$ projects to a point in $Y$,
that is, it is again a fiber of~$\phi$. This means that $\phi$ is
$g$-equivariant with respect to any element $g\in\Bim(X)$,
so that~\mbox{$\Bim(X)=\Bim(X;\phi)$} and~\mbox{$\Aut(X)=\Aut(X;\phi)$}.
\end{remark}

Corollaries~\ref{corollary:strongly-Jordan} and~\ref{corollary:strongly-Jordan-Aut} imply the following result.

\begin{corollary}
\label{corollary:torus}
Let $Y$ be a complex torus, let $X$ be a compact complex manifold,
and let~\mbox{$\phi\colon X\to Y$} be a $\PP^1$-bundle.
If $X$ is not a projectivization of a rank $2$ vector bundle on $Y$,
then the group $\Bim(X)$ is strongly Jordan. If
$X$ is not a projectivization of a decomposable vector
bundle of rank $2$ on $Y$, then the group $\Aut(X)$ is strongly Jordan.
\end{corollary}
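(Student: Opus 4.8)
The plan is to deduce both assertions from Corollaries~\ref{corollary:strongly-Jordan} and~\ref{corollary:strongly-Jordan-Aut}, using Remark~\ref{remark:equivariant} to identify the relevant groups and Lemma~\ref{lemma:projectivization} to rephrase the hypothesis about projectivizations in terms of divisors.

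First I would record that, since $Y$ is a complex torus, it contains no rational curves, so $\Bim(Y)=\Aut(Y)$ by Lemma~\ref{lemma:Aut-torus}, and this group is strongly Jordan by Example~\ref{example:torus}. Next, since $\phi$ is a $\PP^1$-bundle over a complex torus, Remark~\ref{remark:equivariant} applies: a typical fiber of $\phi$ is a rational curve, hence its image under any bimeromorphic selfmap of $X$ is again a fiber of $\phi$, so $\Bim(X)=\Bim(X;\phi)$ and $\Aut(X)=\Aut(X;\phi)$. Thus it is enough to prove that $\Bim(X;\phi)$ (resp. $\Aut(X;\phi)$) is strongly Jordan under the first (resp. second) hypothesis.

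For the first assertion I would argue by contraposition at the level of divisors. Because $\phi$ is a $\PP^1$-bundle, every fiber is isomorphic to $\PP^1$ and the intersection number of a divisor with a fiber is independent of the fiber, so ``typical fiber'' and ``fiber'' may be used interchangeably here. If there were an effective divisor $D$ on $X$ with $D\cdot F=1$ for a fiber $F$, then in particular there would be a divisor of fiber-degree $1$, and Lemma~\ref{lemma:projectivization} would force $X$ to be the projectivization of a rank $2$ vector bundle on $Y$, contrary to hypothesis. Hence no such effective divisor exists, and Corollary~\ref{corollary:strongly-Jordan} applies (with $\Bim(Y)$ strongly Jordan), giving that $\Bim(X;\phi)$, and therefore $\Bim(X)$, is strongly Jordan.

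For the second assertion the hypotheses of Corollary~\ref{corollary:strongly-Jordan-Aut} are satisfied verbatim: $X$ and $Y$ are compact, $\phi$ is a $\PP^1$-bundle, $X$ is by assumption not the projectivization of a decomposable rank $2$ vector bundle on $Y$, and $\Aut(Y)$ is strongly Jordan by Example~\ref{example:torus}. Therefore $\Aut(X;\phi)=\Aut(X)$ is strongly Jordan. The proof is essentially an assembly of earlier results, and the only point that needs attention is matching the divisor-theoretic hypotheses (effective versus arbitrary divisor, and ``fiber'' versus ``typical fiber'') between Lemma~\ref{lemma:projectivization} and Corollary~\ref{corollary:strongly-Jordan}; for a $\PP^1$-bundle these distinctions are immaterial, so no genuine obstacle arises.
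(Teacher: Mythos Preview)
Your proposal is correct and follows essentially the same route as the paper: invoke Remark~\ref{remark:equivariant} to get $\Bim(X)=\Bim(X;\phi)$ and $\Aut(X)=\Aut(X;\phi)$, use Example~\ref{example:torus} for the Jordan property of $\Bim(Y)=\Aut(Y)$, then apply Lemma~\ref{lemma:projectivization} (by contraposition) together with Corollary~\ref{corollary:strongly-Jordan} for the first assertion and Corollary~\ref{corollary:strongly-Jordan-Aut} directly for the second. Your extra care in reconciling ``effective divisor'' with ``divisor'' and ``typical fiber'' with ``fiber'' is sound and not needed beyond what you say, since Lemma~\ref{lemma:projectivization} already applies to arbitrary divisors.
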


\begin{proof}
By Remark~\ref{remark:equivariant}, we have
$\Bim(X)=\Bim(X;\phi)$ and~\mbox{$\Aut(X)=\Aut(X;\phi)$}. Furthermore, according to Example~\ref{example:torus},
the group $\Bim(Y)$ is strongly Jordan.

If $X$ is not a projectivization of a rank $2$ vector bundle on $Y$,
by Lemma~\ref{lemma:projectivization} there does not
exist a divisor on $X$ whose intersection number
with a fiber of $\phi$ equals~$1$.
Therefore, the group $\Bim(X)$ is strongly Jordan by
Corollary~\ref{corollary:strongly-Jordan}.
Similarly, if $X$ is not a projectivization of a decomposable vector
bundle of rank $2$ on $Y$, then the group $\Aut(X)$ is strongly Jordan
by Corollary~\ref{corollary:strongly-Jordan-Aut}.
\end{proof}

For results concerning (the absence of) Jordan property for groups of bimeromorphic automorphisms
of projectivizations of vector bundles
of rank $2$ on complex tori, we refer the reader to~\mbox{\cite[Theorems~1.9, 1.10, and 1.12]{Zarhin2019}}.

Note that for a quasi-projective conic bundle $\phi\colon X\to Y$
existence of a section implies that $X$ is birational to $Y\times\PP^1$.
Therefore, we see from Corollary~\ref{corollary:conic-bundle}
that if a birational automorphism of finite order greater than $2$
acts on $X$ so that the action is fiberwise with respect to $\phi$, then
$X$ is birational to $Y\times\PP^1$ (cf.~\mbox{\cite[Theorem~1.5]{BandmanZarhin2017}}).
For complex manifolds  we deduce the following consequence of Lemma~\ref{lemma:conic-bundle-section}.

\begin{corollary}\label{corollary:conic-bundle-section}
Let $X$ and $Y$ be compact complex manifolds.
Let $\phi\colon X\to Y$ be a surjective holomorphic map whose fibers are one-dimensional and
whose typical fiber is isomorphic to~$\PP^1$.
Suppose that $X$ is not bimeromorphic to a projectivization of a rank $2$ vector bundle on~$Y$.
Suppose also that the group $\Bim(Y)$ is strongly Jordan.
Then the group~\mbox{$\Bim(X; \phi)$} is strongly Jordan.
\end{corollary}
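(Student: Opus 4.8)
The plan is to run the same exact-sequence argument that was used for Corollary~\ref{corollary:strongly-Jordan}, but now feeding in Lemma~\ref{lemma:conic-bundle-section} in place of Lemma~\ref{lemma:projectivization} to guarantee the non-existence of a section-like divisor. First I would record the tautological exact sequence of groups
\begin{equation*}
1\to\Bim(X)_\phi\to \Bim(X;\phi)\to\Bim(Y),
\end{equation*}
where the right-hand map sends $g$ to the induced bimeromorphic selfmap $g_Y$ of $Y$ (which exists precisely because $g\in\Bim(X;\phi)$, i.e. $\phi$ is $g$-equivariant), and the kernel is exactly $\Bim(X)_\phi$ by definition. The group $\Bim(Y)$ is strongly Jordan by hypothesis, so by Lemma~\ref{lemma:group-theory} it suffices to show that $\Bim(X)_\phi$ has bounded finite subgroups.

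For the latter, I would invoke the Remark following Theorem~\ref{theorem:conic-bundle}: the conclusion of that theorem holds as soon as there do not exist two distinct irreducible effective divisors on $X$ whose intersection numbers with a typical fiber of $\phi$ equal $1$. Now the contrapositive of Lemma~\ref{lemma:conic-bundle-section} tells us that if such a divisor $D$ (with intersection number $1$ with a fiber) existed at all, then $X$ would be bimeromorphic to a projectivization of a rank $2$ vector bundle on $Y$, contrary to our assumption. Hence no divisor on $X$ meets a typical fiber in degree $1$, so a fortiori the stronger hypothesis of the Remark is satisfied, and every non-trivial element of a finite subgroup of $\Bim(X)_\phi$ has order $2$ with the group embedding into $\ZZ/2\ZZ\times\ZZ/2\ZZ$ (using Lemma~\ref{lemma:faithful} to make the action faithful and holomorphic on a typical fiber). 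In particular $\Bim(X)_\phi$ has bounded finite subgroups, with bound $4$.

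Combining the two inputs via Lemma~\ref{lemma:group-theory} gives that $\Bim(X;\phi)$ is strongly Jordan, which is the assertion. I do not expect any serious obstacle here: the statement is essentially a formal consequence of Theorem~\ref{theorem:conic-bundle} (in its slightly strengthened remark form), Lemma~\ref{lemma:conic-bundle-section}, and the group-theoretic Lemma~\ref{lemma:group-theory}, assembled exactly as in Corollary~\ref{corollary:strongly-Jordan}. The only point that deserves a moment's care is checking that the hypotheses of Theorem~\ref{theorem:conic-bundle} (or its remark) are genuinely applicable even though here we only assume the fibers are one-dimensional with typical fiber $\PP^1$ rather than that $\phi$ is a $\PP^1$-bundle; but Lemma~\ref{lemma:two-sections} and Lemma~\ref{lemma:faithful} are already stated at that level of generality, so this causes no trouble.
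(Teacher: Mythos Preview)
Your proof is correct and follows essentially the same approach as the paper: the paper simply cites Corollary~\ref{corollary:strongly-Jordan} after invoking the contrapositive of Lemma~\ref{lemma:conic-bundle-section}, whereas you unpack that corollary's proof (the exact sequence, Theorem~\ref{theorem:conic-bundle}, and Lemma~\ref{lemma:group-theory}) inline. The detour through the Remark is unnecessary, since Lemma~\ref{lemma:conic-bundle-section} already rules out \emph{any} divisor of fiber-degree~$1$, so Theorem~\ref{theorem:conic-bundle} applies directly.
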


\begin{proof}
By Lemma~\ref{lemma:conic-bundle-section}, there does not
exist a divisor on $X$ whose intersection number
with a fiber of $\phi$ equals $1$. Hence the group $\Bim(X; \phi)$ is strongly Jordan by
Corollary~\ref{corollary:strongly-Jordan}.
\end{proof}

\begin{corollary}
\label{corollary:torus-Bim}
Let $Y$ be a complex torus, let $X$ be a compact complex manifold,
and let~\mbox{$\phi\colon X\to Y$} be a surjective holomorphic map whose fibers are one-dimensional and
whose typical fiber is isomorphic to~$\PP^1$.
Suppose that $X$ is not bimeromorphic to a projectivization of a rank $2$ vector bundle on $Y$.
Then the group $\Bim(X)$ is strongly Jordan.
\end{corollary}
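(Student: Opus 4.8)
The plan is to combine Corollary~\ref{corollary:conic-bundle-section} with the facts about complex tori that were already assembled in this section. First I would invoke Remark~\ref{remark:equivariant}: since $Y$ is a complex torus, it contains no rational curves, so the image of a typical fiber of $\phi$ under any bimeromorphic selfmap of $X$ must again be a fiber of $\phi$. Hence $\phi$ is $g$-equivariant for every $g\in\Bim(X)$, which gives the identification $\Bim(X)=\Bim(X;\phi)$. This reduces the problem to proving that $\Bim(X;\phi)$ is strongly Jordan.

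Next I would supply the hypotheses needed to apply Corollary~\ref{corollary:conic-bundle-section}. By Example~\ref{example:torus} the group $\Bim(Y)$ is strongly Jordan, so that assumption is met. The remaining hypothesis of Corollary~\ref{corollary:conic-bundle-section} is precisely the standing assumption of the present statement, namely that $X$ is not bimeromorphic to a projectivization of a rank $2$ vector bundle on $Y$. Applying Corollary~\ref{corollary:conic-bundle-section} then yields that $\Bim(X;\phi)$ is strongly Jordan, and combined with the identification from the previous paragraph we conclude that $\Bim(X)$ is strongly Jordan.

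There is no real obstacle here: the corollary is a direct specialization of Corollary~\ref{corollary:conic-bundle-section}, and the only point requiring a moment of attention is that one must use the \emph{bimeromorphic} (rather than biregular) classification in Lemma~\ref{lemma:conic-bundle-section}, which is exactly what Corollary~\ref{corollary:conic-bundle-section} was set up to handle. In particular we do not need $\phi$ to be a $\PP^1$-bundle — only that its fibers are one-dimensional with typical fiber $\PP^1$ — so the statement is genuinely more general than the first half of Corollary~\ref{corollary:torus}, at the cost of only controlling $\Bim(X)$ rather than $\Aut(X)$. For completeness I would also note explicitly that strong Jordanness is inherited correctly through the exact sequence $1\to\Bim(X)_\phi\to\Bim(X;\phi)\to\Bim(Y)$ via Theorem~\ref{theorem:conic-bundle} and Lemma~\ref{lemma:group-theory}, but this is already packaged inside Corollary~\ref{corollary:strongly-Jordan} on which Corollary~\ref{corollary:conic-bundle-section} relies, so citing the latter suffices.
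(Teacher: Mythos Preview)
Your proposal is correct and matches the paper's own proof essentially verbatim: invoke Remark~\ref{remark:equivariant} to get $\Bim(X)=\Bim(X;\phi)$, use Example~\ref{example:torus} for strong Jordanness of $\Bim(Y)$, and conclude via Corollary~\ref{corollary:conic-bundle-section}. The additional commentary you give about the underlying exact sequence and Lemma~\ref{lemma:conic-bundle-section} is accurate but, as you note, already absorbed into the cited corollary.
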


\begin{proof}
By Remark~\ref{remark:equivariant} we have
$\Bim(X)=\Bim(X;\phi)$, and by Example~\ref{example:torus}
the group~\mbox{$\Bim(Y)$} is strongly Jordan.
Therefore, the assertion follows from Corollary~\ref{corollary:conic-bundle-section}.
\end{proof}

It would be interesting to find out if
Theorem~\ref{theorem:conic-bundle}
or Corollaries~\ref{corollary:strongly-Jordan} and~\ref{corollary:conic-bundle-section}
can be generalized to the case of fibrations whose typical
fiber is a rational surface. We refer the reader to
\cite{ProkhorovShramov-3folds} and \cite{ShramovVologodsky}
for results of similar flavor concerning projective varieties.
Also, I do not know the answer
to the following question.

\begin{question}
Does there exist an indecomposable vector bundle of rank $2$ on a complex torus
such that for its projectivization $X$ the group $\Bim(X)$ is not strongly Jordan?
\end{question}

Finally, the following general question looks interesting and relevant to the
subject of this paper.

\begin{question}[{cf. \cite[Proposition~4.1]{EN}}]
Over which complex tori there exist a $\PP^1$-bundle that is not
a projectivization of a rank $2$ vector bundle?
\end{question}

\end{document}